 \newtheorem{question}{Question}
 \newtheorem{thm}{Theorem}
\newtheorem{lem}[thm]{Lemma}
\newtheorem{definition}[thm]{Definition}
\newtheorem{lemma}[thm]{Lemma}
\newtheorem{proposition}[thm]{Lemma}
\newtheorem{corollary}[thm]{Corollary}
\newtheorem{remark}[thm]{Remark}
\DeclareMathOperator{\Int}{int}
\DeclareMathOperator{\diam}{diam}
\newcommand{\alp}{3.141592653589793238462643383279502884197169399375105820974944592307816406286-3.12}
\newcommand{\degr}{\mathrm{deg}}
\newcommand{\mo}{\mathrm{mod}}
\newcommand{\comp}{\mathrm{Comp}}
\newcommand{\id}{\mathrm{id}}
 \newcommand{\eps}{\varepsilon}
 \newcommand{\ta}{\theta}
 \newcommand{\Te}{\Theta}
 \def\B{{\mathcal B}}
 \newcommand{\Q}{\mathbb{Q}}
 \newcommand{\PQ}{\mathcal{Q}}
 \newcommand{\PP}{\mathcal{P}}
 \newcommand{\F}{\mathcal{F}}
 \newcommand{\N}{\mathbb{N}}
 \newcommand{\Z}{\mathbb{Z}}
 \newcommand{\R}{\mathbb{R}}
 \newcommand{\C}{\mathbb{C}}
 \newcommand{\Ci}{\mathbb{S}^1}
 \newcommand{\set}[1]{\left\{#1\right\}}
\begin{document}

\title[S-Limit shadowing for
circle maps]{S-Limit shadowing is generic for continuous Lebesgue measure preserving circle maps}
\author{Jozef Bobok}
\author{Jernej \v Cin\v c}
\author{Piotr Oprocha}
\author{Serge Troubetzkoy}

\address[J.\ Bobok]{Department of Mathematics of FCE, Czech Technical University in Prague, 
Th\'akurova 7, 166 29 Prague 6, Czech Republic}
\email{jozef.bobok@cvut.cz}

\address[J.\ \v{C}in\v{c}]{Faculty of Mathematics, University of Vienna, Oskar-Morgenstern-Platz 1,
A-1090 Vienna, Austria. -- and -- Centre of Excellence IT4Innovations - Institute for Research and Applications of Fuzzy Modeling, University of Ostrava, 30. dubna 22, 701 03 Ostrava 1, Czech Republic}
\email{jernej.cinc@osu.cz}

\address[P.\ Oprocha]{AGH University of Science and Technology, Faculty of Applied Mathematics, 
al.\ Mickiewicza 30, 30-059 Krak\'ow, Poland. -- and -- 
Centre of Excellence IT4Innovations - Institute for Research and Applications of Fuzzy Modeling, University of Ostrava, 30. dubna 22, 701 03 Ostrava 1, Czech Republic}
\email{oprocha@agh.edu.pl}

\address[S.\ Troubetzkoy]{Aix Marseille Univ, CNRS, Centrale Marseille, I2M, Marseille, France
postal address: I2M, Luminy, Case 907, F-13288 Marseille Cedex 9, France}
\email{serge.troubetzkoy@univ-amu.fr}

\date{\today}

\subjclass[2020]{37E05, 37B65, 37C20}
\keywords{Circle map, preserving Lebesgue measure, shadowing, limit shadowing, s-limit shadowing, generic property}

\begin{abstract}
In this paper we show that generic continuous Lebesgue measure preserving circle maps have the s-limit shadowing property. In addition we obtain that s-limit shadowing is a generic property also for continuous circle maps.  In particular, this implies that classical shadowing, periodic shadowing and limit shadowing are generic in these two settings as well.
\end{abstract}

\maketitle

\section{Introduction}
The notion of {\em shadowing} (or pseudo orbit tracing property, see Definition~\ref{def:shadowing}) is a classical notion in the theory of dynamical systems. It was defined as a tool for better understanding of asymptotic aspects of diffeomorphisms dynamics independently by Anosov \cite{Anosov} and Bowen \cite{Bowen}. 
Informally, the shadowing property ensures that computational errors do not accumulate in the following sense: in the systems with shadowing property the approximate trajectories will reflect real dynamics up to some small error that is made in each iteration. In particular, this is of great importance in systems with sensitive dependence on initial conditions, where small errors may potentially result in large divergence of trajectories.

While we are still lacking the full classification of systems with shadowing, there are classes where its occurrence has been completely characterized. To look only at the most general results, all uniformly hyperbolic systems have shadowing property and Walters characterized the symbolic dynamical systems with shadowing property, they are shifts of finite type  (see the books \cite{Pilyugin,Pilyugin2} for more explanation). A useful collection of conditions characterizing shadowing in the latter setting was recently provided by Good and Meddaugh \cite{GM}. 

We call a property {\em generic} if it is satisfied on at least a dense $G_{\delta}$  of the underlying space.
A naturally related question which attracted attention of many researchers is the genericity of shadowing in dynamical systems. Hyperbolic systems are known to be rather special, and finding an answer in other classes of functions usually turns out to be a delicate matter. The first results in this direction were obtained in dimension one by Yano in \cite{Yano} for the space of homeomorphisms on the unit circle and Odani in \cite{Odani} for all smooth manifolds of the dimension at most $3$. A particularly nice technique was introduced by Pilyugin and Plamenevskaya in \cite{PilPla} who proved genericity of shadowing for homeomorphisms on any smooth compact manifold without boundary. This result was later extended using topological tools to a wider context (e.g. see \cite{PK}). 

Mizera proved  \cite{Mizera} that shadowing is a generic property in the class of continuous maps of the interval or circle. Recently, these results were extended to many other one-dimensional spaces, see \cite{KO,KMO,Med}. It turned out that non-invertibility is not an obstacle to obtain genericity of shadowing also in higher dimension \cite{PilOpr}.

In the literature there are many different generalizations of the shadowing property. Among the most natural ones is the {\em limit shadowing property} (see Definition~\ref{def:shadowing}), which was introduced by Pilyugin at al. \cite{Eriola}.
In this definition, error in consecutive elements of pseudo-trajectories tends to zero (so-called asymptotic pseudo orbit), but we require that accuracy of tracing increases with time.
While limit shadowing seems completely different than shadowing, it was proved in \cite{Limit} that transitive maps with limit shadowing also have the shadowing property.
Recently \cite{ACC}, it was proven that structurally stable diffeomorphisms and some pseudo-Anosov diffeomorphisms of the two-sphere satisfy both the shadowing and the limit shadowing property. 

In general, it can happen that for an asymptotic pseudo orbit which is also a $\delta$-pseudo orbit, the point which $\eps$-traces it and the point which traces it in the limit are two different points \cite{Barwel}. This shows that possessing a common point for such a tracing is a stronger property than the shadowing and limit shadowing properties together. The described property was introduced in \cite{Sakai} and is called {\em s-limit shadowing property} (again see Definition~\ref{def:shadowing} for the precise definition).
Not much is known about s-limit shadowing or even limit shadowing with respect to genericity in particular classes of functions. Besides the results mentioned above, the only result known to the authors which barely touches this problem is \cite{MazOpr}, where it is proven that in the class of continuous maps on manifolds of dimension $m\geq 1$, s-limit shadowing is a dense property with respect to the metric of uniform convergence.

The main difficulty in proving denseness or genericity of s-limit shadowing is its ``instability"; meaning that, intuitively, arbitrarily small perturbations can destroy it. Therefore, even the density result in \cite{MazOpr} relies on a very careful control of consecutive perturbations.
Our main theorem here, in particular, addresses the following very general question from \cite{MazOpr}:
\begin{center}
	(Q2) \textit{Is s-limit shadowing a $\mathcal{C}^0$-generic property on spaces where shadowing is generic?}
\end{center}

Let $\lambda$ denote the normalized Lebesgue measure on $I := [0,1]$ and let $\tilde\lambda$ denote the normalized Lebesgue measure on $\Ci$.
The particular setting that we are interested in this paper is the family of continuous Lebesgue measure preserving maps of the unit circle $C_{\tilde\lambda}(\mathbb{S}^1)$ endowed with topology of uniform convergence, which makes it a complete space. Topological and measure theoretical properties of generic Lebesgue measure preserving interval maps were studied in \cite{Bo91,BT,BCOT}. We obtain the following two new results:

\begin{thm}\label{thm:main}
	The s-limit shadowing property is generic in $C_{\tilde\lambda}(\mathbb{S}^1)$.
\end{thm}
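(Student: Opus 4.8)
The plan is to exhibit a dense $G_\delta$ subset of $C_{\tilde\lambda}(\Ci)$ every member of which has the s-limit shadowing property; since $C_{\tilde\lambda}(\Ci)$ with the uniform metric is complete, hence Baire, this yields Theorem~\ref{thm:main}. Following the usual template for genericity of shadowing-type properties, I would produce this set as $\bigcap_{n\ge 1}\mathcal U_n$ with each $\mathcal U_n$ open and dense. The subtlety, emphasized in the introduction, is that s-limit shadowing is \emph{unstable} under arbitrarily small perturbations, so the $\mathcal U_n$ cannot be neighbourhoods of maps having the property; instead each $\mathcal U_n$ must encode a genuinely \emph{finitary}, hence $C^0$-open, approximate form of s-limit shadowing, whose infinite conjunction recovers the exact (infinitary, including the asymptotic) property. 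The stated corollaries for classical, periodic and limit shadowing follow afterwards, s-limit shadowing implying both shadowing and limit shadowing, and periodic shadowing being recovered from shadowing together with the (generic) density of periodic points in this class.

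For the sets $\mathcal U_n$ --- the ``$G_\delta$ part'' --- I would adapt the reduction used to prove that plain shadowing is $G_\delta$: replace ``every $\delta$-pseudo-orbit is $\eps$-traced'' by the finite-horizon statement ``every $\delta$-pseudo-orbit of length $N$ is $\eps$-traced on $\{0,\dots,N\}$ by a point lying $\delta$-close to its endpoint'', the endpoint clause being the \emph{gluing} device that lets finite traces be concatenated into an infinite one by compactness. Only finitely many iterates of $f$ appear in such a statement, so, with the gluing margin absorbing the degradation of the constants under perturbation, it defines a $C^0$-open set. The feature absent from the shadowing case is that the \emph{asymptotic} clause of s-limit shadowing --- pseudo-orbit errors tending to zero, tracing precision tending to zero --- must be threaded through the same finite-horizon/gluing machinery; one works with a decreasing budget of error and of precision across successive blocks, so that the concatenated trace automatically limit-shadows. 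Carrying out this bookkeeping, and verifying that the resulting finitary conditions are open with intersection exactly the s-limit shadowing maps, is the work of this half.

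For the density of the $\mathcal U_n$ --- the ``density part'' --- it suffices to exhibit one dense subfamily of $C_{\tilde\lambda}(\Ci)$ consisting of maps that genuinely have s-limit shadowing, as these lie in every $\mathcal U_n$. I would take piecewise affine, \emph{Markov} (the finite set of break points together with partition endpoints being forward invariant), topologically exact elements of $C_{\tilde\lambda}(\Ci)$. Their density rests on the piecewise-affine approximation theory for Lebesgue measure preserving maps developed on the interval in \cite{Bo91,BT,BCOT} and transported to the circle, where one must respect that $\tilde\lambda$-invariance rigidly ties the slopes together --- along each fibre the reciprocals of the slope moduli sum to $1$ --- while still arranging a forward invariant break set and topological exactness. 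For such a $g$, s-limit shadowing follows from its symbolic model: $g$ is a topological factor, via the coding induced by its Markov partition, of a mixing one-sided shift of finite type, and shifts of finite type have s-limit shadowing (an asymptotic pseudo-orbit in the shift is traced by the sequence of its initial symbols, which $\eps$-shadows it and, the errors decaying, limit-shadows it). It then remains to lift an asymptotic pseudo-orbit of $g$ to the shift and push the symbolic trace back down, which is routine away from the partition endpoints and needs a short separate argument at them.

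I expect the main obstacle to lie in the density step, in two places. First, the approximating Markov map must be a bona fide element of $C_{\tilde\lambda}(\Ci)$ that simultaneously carries all the structure (forward invariant break set, topological exactness) needed to certify s-limit shadowing, and reconciling this with the slope constraint imposed by $\tilde\lambda$-preservation is where the construction is most delicate. Second, transferring s-limit shadowing --- an unstable property --- through the non-injective coding map, with the limit/asymptotic clause intact, requires care at the partition endpoints. On the $G_\delta$ side, the corresponding delicate point is to set up the finite-horizon conditions so that they are honestly open and yet, through the decreasing error/precision budget and the gluing clause, force the full asymptotic tracing. Once Theorem~\ref{thm:main} is in place, running the same two-part argument with the measure-preservation constraint dropped gives genericity of s-limit shadowing in $C(\Ci)$ as well.
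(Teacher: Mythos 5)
Your overall template (a dense $G_\delta$ obtained from open ``finitary'' conditions, plus a dense family of maps witnessing density) is not the route the paper takes, and both of its pillars have genuine gaps. The critical one is the $G_\delta$ half. The finite-horizon/compactness reduction you describe does recover the plain shadowing clause, but it cannot deliver the asymptotic clause of s-limit shadowing: a limit of points tracing longer and longer finite blocks only $\eps$-traces, and gives no decay of the tracing error along the orbit; and your ``decreasing budget across successive blocks'' cannot be realized by concatenation, because the map is non-invertible and a \emph{single} orbit must do all the tracing --- you cannot restart at a better-placed point when a new, finer block begins. Making the hand-over from coarse to fine tracing possible is exactly the crux, and it is where the paper spends its effort: it does not attempt to write the set of s-limit shadowing maps as an intersection of open conditions (it is not known, and not claimed, that this set is $G_\delta$). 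Instead it builds special perturbed maps $\theta_m$ whose images of partition arcs have robust covering properties (C1)--(C5), takes small neighborhoods $U_m$ arranged so that the associated partitions $\PQ_m$ \emph{refine} one another along nested chains (this is what the rationality bookkeeping $g(\phi(\Q_{\pi}))\subset\phi(\Q)$ is for), and proves s-limit shadowing only for maps in the residual set $A=\bigcap_n\bigcup_{m\ge n}U_m$: for such a $\tau$ one has nested neighborhoods $U_{m(1)}\supset U_{m(2)}\supset\cdots$ shrinking to $\tau$, and the ``switch'' $W_{N+1}\subset\tau(Q_N)$ from a coarse tracing arc to a fine one is forced by condition (C\ref{C5}) together with the absence of boundary on the circle. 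Nothing in your finite-horizon scheme substitutes for this mechanism, and the endpoint-gluing clause you import from the shadowing literature does not address it.

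The density half is also not sound as stated. Piecewise affine, Markov, topologically exact maps need not have shadowing at all, let alone s-limit shadowing: the Markov coding is a non-injective factor map, shadowing does not pass to factors in general, and the failure happens precisely at turning points (compare the tent-map family, where Markov parameters can fail shadowing), which is the place you defer to ``a short separate argument.'' Moreover this step is structurally unnecessary in the paper's argument: the dense family $C_{\tilde\lambda,0}(\Ci)$ constructed there is \emph{not} claimed to have s-limit shadowing; its role is only to supply piecewise affine maps with break points in $\phi(\Q_{\pi})$ and images in $\phi(\Q)$, so that after the two explicit measure-preserving perturbations (the $\beta$- and $\Psi$-modifications of Lemmas \ref{l:1} and \ref{l:2}) one obtains the maps $\theta_m$, the refining partitions, and the perturbation-stable covering conditions on which the whole proof rests. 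If you want to salvage your outline you would need either a proof that s-limit shadowing maps form a $G_\delta$ set in $C_{\tilde\lambda}(\Ci)$ --- which is exactly the kind of stability statement the paper's introduction warns is problematic --- or a construction, as in the paper, of a bespoke residual set on which the asymptotic hand-over can be carried out.
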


\begin{corollary}
     The limit shadowing, periodic shadowing and shadowing property are generic in $C_{\tilde\lambda}(\mathbb{S}^1)$.
\end{corollary}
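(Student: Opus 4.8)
The plan is to deduce the Corollary directly from Theorem~\ref{thm:main}. Recall that for a continuous map $f$ the s-limit shadowing property asks, for every $\eps>0$, for a $\delta>0$ such that (i) every $\delta$-pseudo-orbit is $\eps$-shadowed by a genuine orbit, and (ii) every $\delta$-pseudo-orbit which is in addition an asymptotic pseudo-orbit is $\eps$-shadowed by an orbit that also shadows it in the limit. Let $\mathcal{S}\subseteq C_{\tilde\lambda}(\mathbb{S}^1)$ be the dense $G_\delta$ set furnished by Theorem~\ref{thm:main}. It suffices to check that every $f\in\mathcal{S}$ has the shadowing and the limit shadowing properties and that, after intersecting $\mathcal{S}$ with one further dense $G_\delta$ set if necessary, also the periodic shadowing property; each of the four properties then holds on a dense $G_\delta$ set, and a finite intersection of dense $G_\delta$ sets is again dense $G_\delta$ by the Baire category theorem.

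The shadowing property is nothing but clause (i) in the definition of s-limit shadowing, so there is nothing to prove. For limit shadowing, fix $f\in\mathcal{S}$ and an asymptotic pseudo-orbit $(x_i)_{i\geq 0}$, i.e.\ $d(f(x_i),x_{i+1})\to 0$; given $\eps>0$ take the corresponding $\delta>0$. There is $N$ for which $(x_{i+N})_{i\geq 0}$ is a $\delta$-pseudo-orbit that is still asymptotic, so clause (ii) produces $y$ with $d(f^{i}(y),x_{i+N})\to 0$. Every $f\in C_{\tilde\lambda}(\mathbb{S}^1)$ is onto — its image is closed and of full $\tilde\lambda$-measure, hence all of $\mathbb{S}^1$ — so there is $z$ with $f^{N}(z)=y$, and then $d(f^{i}(z),x_i)\to 0$; thus $(x_i)$ is limit-shadowed. (Alternatively one quotes the well-known fact that s-limit shadowing implies both shadowing and limit shadowing.)

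The delicate point is periodic shadowing: a periodic pseudo-orbit is never asymptotic, so clause (ii) gives no leverage, and it is not a priori clear that s-limit shadowing forces the orbit tracing a periodic pseudo-orbit to be periodic. I would handle this in one of two ways: either by invoking the known implication from s-limit shadowing to periodic shadowing, or — self-containedly — by intersecting $\mathcal{S}$ with the set of maps in $C_{\tilde\lambda}(\mathbb{S}^1)$ that are transitive with dense set of periodic points, which is dense $G_\delta$ by the results on generic Lebesgue measure preserving maps recalled in the Introduction, and then showing that a transitive map with the shadowing property has periodic shadowing. For the latter one can use that the whole circle is then a single chain transitive class with dense periodic points, so a periodic pseudo-orbit that is traced by some orbit is in fact traced by a periodic one; concretely, $f^{p}$ maps the closed $\eps$-tube of a period-$p$ pseudo-orbit into itself and carries each of its components into a single component, which one leverages — together with density of periodic points — to locate a periodic point inside (a slight enlargement of) the tube. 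I expect precisely this last step, namely producing a \emph{periodic} rather than merely a recurrent or tracing point, to be the main obstacle, as it is the place where the additional structure — surjectivity, transitivity, one-dimensionality of the circle — must genuinely be used.
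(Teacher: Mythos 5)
Your treatment of shadowing and limit shadowing is correct and is exactly the paper's route: the paper disposes of these two by Remark~\ref{rem:implies} (s-limit shadowing implies both), and your index-realignment via surjectivity (every $f\in C_{\tilde\lambda}(\Ci)$ is onto because an omitted open arc would have positive measure but empty preimage) is a fine way to make the limit-shadowing implication explicit.

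The genuine gap is the periodic shadowing part, which you leave unproved. Your first route, ``invoke the known implication from s-limit shadowing to periodic shadowing,'' cannot be quoted: no such general implication is stated in the paper or is standard, and indeed clause (2) of Definition~\ref{def:shadowing} gives no leverage on periodic pseudo orbits (a periodic $\delta$-pseudo orbit is asymptotic only when it is an actual orbit), so nothing in s-limit shadowing by itself produces a \emph{periodic} tracing point. Your second route needs two ingredients that are not available as stated: genericity of transitivity together with dense periodic points in $C_{\tilde\lambda}(\Ci)$ is recalled in the Introduction only for the interval setting \cite{BT,BCOT}, not for the circle, and the lemma ``transitive $+$ shadowing $\Rightarrow$ periodic shadowing'' is precisely the step you only sketch and yourself flag as the main obstacle -- the $\eps$-tube being mapped into itself by $f^p$ does not by itself yield a fixed point of $f^p$ inside it. The fix lies inside the paper's own construction and needs neither transitivity nor dense periodic points: in the proof of Lemma~\ref{l:4} the tracing point is produced from closed arcs $Q_s$ with $\tau(Q_s)\supset Q_{s+1}$, and the choice of $J_s$ and $Q_s$ depends only on $x_s$ and on which of $L_1^{J_{s+1}},R_1^{J_{s+1}}$ is met by $\theta(J_s)$; hence for a periodic $\delta$-pseudo orbit of period $N$ one can choose $J_{s+N}=J_s$ and $Q_{s+N}=Q_s$. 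A closed loop of coverings $\tau(Q_0)\supset Q_1,\dots,\tau(Q_{N-1})\supset Q_0$ by proper closed arcs of the circle yields, by the standard itinerary/covering lemma (pass to subarcs mapping exactly onto the next arc and apply the intermediate value theorem to a lift), a point $z\in Q_0$ with $\tau^N(z)=z$ whose orbit $\eps$-traces the pseudo orbit. Thus every map in the residual set $A$ from the proof of Theorem~\ref{thm:main} has periodic shadowing, which is the argument your proposal is missing.
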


In the context of Lebesgue measure preserving functions, the genericity of shadowing was recently proven in \cite{GT} for homeomorphisms on manifolds (with or without boundary) of dimension at least $2$ where the authors use Oxtoby-Ulam's theorem \cite{OU} and its underlying subdivision of any such manifold. For manifolds of dimension $1$ it is natural to ask analogous questions for non-invertible maps and results here can also be viewed as a contribution along this line of research.  Let $C_{\lambda}(I)$ denote the family of Lebesgue measure preserving maps equipped with the metric of uniform convergence. For $C_{\lambda}(I)$, the genericity of shadowing and periodic shadowing was proven recently in \cite{BCOT}; therefore, the results obtained here can be viewed as strengthening of those results. However, we need to note that proving the genericity of s-limit shadowing turns out to be very delicate and, in particular, we cannot apply the main idea of the proof to the interval setting (see the explanation in Section~\ref{sec:Final}). 

Our result (with simplifications of the proof) holds in an even looser environment. By $C(\mathbb{S}^1)$ we denote the class of continuous interval maps endowed with the topology of uniform convergence. In this setting we obtain the following two new results.

\begin{thm}
	The s-limit shadowing property is generic in $C(\mathbb{S}^1)$.
\end{thm}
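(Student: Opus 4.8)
The plan is to show, following the same two-step scheme used for Theorem~\ref{thm:main}, that the set $\mathcal{SL}\subseteq C(\mathbb{S}^1)$ of continuous circle maps with the s-limit shadowing property contains a dense $G_\delta$ subset. Both steps---density of $\mathcal{SL}$ and a $G_\delta$ mechanism---become lighter than in the measure preserving case, because the perturbations used along the way are no longer required to preserve $\tilde\lambda$, so the measure-repairing adjustments needed in the proof of Theorem~\ref{thm:main} can be dropped; this is the meaning of ``simplifications of the proof''.

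\emph{Density.} The set $\mathcal{SL}$ is dense in $C(\mathbb{S}^1)$: this is already contained in the density theorem of \cite{MazOpr} for continuous maps of $m$-dimensional manifolds with $m\ge 1$. It is also convenient for the next step to have a concrete dense subfamily of $\mathcal{SL}$, for instance the piecewise affine, locally eventually onto circle maps. These are dense in $C(\mathbb{S}^1)$ (first approximate uniformly by a piecewise affine map, then insert sufficiently steep zig-zags to make it locally eventually onto while staying $C^0$-close), and they have s-limit shadowing: expansion allows small one-step errors to be corrected, which yields the $\eps$-tracing on finite windows, and for an asymptotic pseudo-orbit whose one-step errors tend to $0$ a telescoping sequence of such corrections produces a single point realizing both the $\eps$-tracing and the limit-tracing.

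\emph{$G_\delta$ mechanism.} The definition of s-limit shadowing is not visibly a countable intersection of open sets: it imposes an asymptotic, hence non-finitary and $C^0$-fragile, condition on pseudo-orbits, and it demands that one and the same point realize both the $\eps$-tracing and the limit-tracing. To circumvent this, for each $k\in\mathbb{N}$ one introduces an open set $U_k$ defined by a finite-window surrogate of ``$(1/k)$-s-limit shadowing'': there exist $\delta>0$ and an increasing sequence of window lengths such that every $\delta$-pseudo-orbit whose consecutive errors decay at a prescribed rate is, restricted to each window, $(1/k)$-traced by a point, with the tracing points on consecutive windows compatible on their overlap. Each such requirement involves only finitely long pieces of orbits, hence survives sufficiently small $C^0$-perturbations, so $U_k$ is open; and $\mathcal{SL}\subseteq U_k$ together with the density of $\mathcal{SL}$ forces $U_k$ to be dense. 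A diagonal argument over $k$ and over the windows, together with compactness of $\mathbb{S}^1$, then shows that every map in $\bigcap_k U_k$ genuinely has s-limit shadowing, i.e.\ $\bigcap_k U_k\subseteq\mathcal{SL}$. Hence $\mathcal{SL}$ contains the dense $G_\delta$ set $\bigcap_k U_k$, and the theorem follows; in particular $\mathcal{SL}$ is itself a dense $G_\delta$.

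The main obstacle is the calibration of the surrogate sets $U_k$: they must be weak and finitary enough to be open and to be satisfied by a dense class, yet strong enough that their countable intersection forces the asymptotic, single-point tracing required by true s-limit shadowing. This is precisely the step that is delicate in the proof of Theorem~\ref{thm:main}, where every local modification used to verify the analogue of $U_k$ must additionally be rebalanced to restore Lebesgue measure (which is also the source of the obstruction to an interval version); removing that constraint is exactly what makes the argument in $C(\mathbb{S}^1)$ substantially shorter.
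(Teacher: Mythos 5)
Your outline reduces the theorem to the construction of ``surrogate'' open sets $U_k$, but that construction is precisely what is missing, and the properties you ascribe to it are in tension with each other. If each $U_k$ only constrains finitely many windows of bounded length, then for small $\delta$ every continuous map satisfies it (uniform continuity lets $x_0$ itself trace a short $\delta$-pseudo-orbit), so the intersection carries no asymptotic information; if instead $U_k$ quantifies over an infinite increasing sequence of windows with compatibility along all overlaps, it is no longer a finitary condition and the claimed $C^0$-openness is unjustified -- indeed the ``instability'' of s-limit shadowing highlighted in the introduction is exactly the statement that such conditions are destroyed by arbitrarily small perturbations. The final ``diagonal argument plus compactness'' has the same problem in sharper form: from points tracing longer and longer initial windows of an asymptotic pseudo-orbit you can extract a limit point that $\eps$-traces it (that condition is closed), but you cannot conclude that the limit point's errors tend to $0$; the asymptotic requirement is not closed under taking limits of window-tracing points, and obtaining \emph{one} point that does both is the whole content of the theorem. (Also, your closing remark that $\mathcal{SL}$ is ``itself a dense $G_\delta$'' does not follow from its containing one.) The density step is fine -- \cite{MazOpr} gives it directly, though your claim that piecewise affine locally eventually onto maps have s-limit shadowing is itself only sketched -- but density plus an uncalibrated $G_\delta$ scheme is not a proof.

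The paper closes this gap by \emph{not} defining the $U_m$'s intrinsically: they are small balls around explicitly constructed perturbations $\theta_m$ of a dense sequence of piecewise affine maps, each equipped with a partition $\PQ_m\subset\phi(\Q_{\pi})$ satisfying the covering conditions (C\ref{C1})--(C\ref{C6}), and the bookkeeping $\theta_m(\PQ_m)\subset\phi(\Q)$, $\PQ_m\subset\phi(\Q_{\pi})$ is used to arrange that whenever $U_m$ is nested inside an earlier $U_i$ the partition $\PQ_m$ \emph{refines} $\PQ_i$. For $\tau$ in the residual set $\bigcap_{n}\bigcup_{m\ge n}U_m$ one then traces a given asymptotic $\delta$-pseudo-orbit simultaneously at a coarse scale (arcs $Q_s$ from $\PQ_{m(i)}$) and a finer scale (arcs $W_s$ from $\PQ_{m(j)}$), and the conditions (C\ref{C2}), (C\ref{C3}), (C\ref{C5}), (C\ref{C6}) are exactly what allows the switch $W_{N+1}\subset\tau(Q_N)$ from coarse to fine arcs; iterating over scales produces a single nested sequence of arcs $I_s$ with $\tau(I_s)\supset I_{s+1}$ and diameters tending to $0$ along the pseudo-orbit, and any $z\in\bigcap_{s}\tau^{-s}(I_s)$ is the required common tracing point. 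In the non-measure-preserving setting the same scheme applies with the measure-repairing ingredients (Lemmas \ref{l:31}, \ref{l:2}, etc.) simplified away, as you correctly anticipate -- but the adaptive choice of neighborhoods with compatible refining partitions and the scale-switching argument are indispensable, and your proposal contains no substitute for them.
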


\begin{corollary}
	The limit shadowing property is generic in $C(\mathbb{S}^1)$.
\end{corollary}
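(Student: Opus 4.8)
The plan is to obtain this immediately from the preceding theorem together with the elementary observation that the s-limit shadowing property implies the limit shadowing property. Recall from Definition~\ref{def:shadowing} that a map with s-limit shadowing has, for every sequence which is simultaneously a $\delta$-pseudo-orbit and an asymptotic pseudo-orbit, a \emph{single} point that both $\eps$-traces it and traces it in the limit; forgetting the $\eps$-tracing requirement, such a map in particular satisfies the limit shadowing property. Hence
\[
\{\, f \in C(\mathbb{S}^1) : f \text{ has s-limit shadowing}\,\} \subseteq \{\, f \in C(\mathbb{S}^1) : f \text{ has limit shadowing}\,\}.
\]

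By the preceding theorem the left-hand set contains a dense $G_\delta$ subset of $C(\mathbb{S}^1)$. Since that dense $G_\delta$ set is then also contained in the right-hand set, the latter is residual, which is precisely the assertion that limit shadowing is generic in $C(\mathbb{S}^1)$.

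There is essentially no obstacle to overcome: the whole argument rests on the set-theoretic inclusion of the two classes of maps, which is immediate from the definitions, together with the trivial fact that any superset of a set containing a dense $G_\delta$ again contains a dense $G_\delta$. All of the genuine difficulty lies in the theorem, not in this corollary. The same one-line reasoning, applied instead to Theorem~\ref{thm:main}, simultaneously yields the companion corollary in the measure-preserving setting, since s-limit shadowing implies classical shadowing, periodic shadowing and limit shadowing alike.
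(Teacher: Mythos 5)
Your argument is exactly the paper's: the corollary is immediate from the genericity of s-limit shadowing in $C(\mathbb{S}^1)$ together with the implication recorded in Remark~\ref{rem:implies}, plus the trivial fact that any superset of a dense $G_\delta$ set is residual. The only gloss is that limit shadowing quantifies over \emph{all} asymptotic pseudo orbits, not merely asymptotic $\delta$-pseudo orbits, so strictly one applies s-limit shadowing to a tail of the given sequence (which is eventually a $\delta$-pseudo orbit); this is the standard step the paper subsumes in Remark~\ref{rem:implies}.
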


Let us outline the structure of the paper. In Preliminaries we first review the definitions related to the shadowing property that we address in our context of Lebesgue measure preserving circle maps. Then we review the basic setting of $C_{\lambda}(\mathbb{S}^1)$ which we work with in the rest of the paper. We start Section~\ref{sec:main} by outlining the proof of main theorem. In Subsection~\ref{sec:SpecialFamilies} we restrict our attention to particular families of maps in $C_{\lambda}(\mathbb{S}^1)$ and we study their properties; we use these families and their properties later in the proof of s-limit shadowing. In Subsection~\ref{sec:PartitionsSpecialPerturbations} the proof of s-limit shadowing starts. We pose five conditions (C\ref{C1})-(C\ref{C5}) that our partitions and special perturbations need to satisfy. In the rest of this section we address how to get such partitions and perturbations from machinery developed in Subsection~\ref{sec:SpecialFamilies}. Subsection~\ref{sec:MainProof} gives the proof of Theorem~\ref{thm:main} using the assumptions given by conditions (C\ref{C1})-(C\ref{C5}) in Subsection~\ref{sec:PartitionsSpecialPerturbations}. We conclude the paper with Section~\ref{sec:Final} where we give a brief explanation why the proof of s-limit shadowing as presented in this paper can not work in the setting of Lebesgue measure preserving interval maps.

\section{Preliminaries}

Denote by $\N:=\{1,2,3,\ldots\}$ and $\N_0:=\N\cup\{0\}$. Let $\Ci:=\{z\in\C\colon~\vert z\vert=1\}$ be the \emph{unit circle}. For $x,y\in\mathbb{S}^1$ let $d(x,y)$ denote the minimal normalized arc-length distance on $\Ci$ between $x$ and $y$. 

\subsection{Shadowing property} First we give the definition of shadowing property and its related extensions that we use in this paper. 

\begin{definition}
For $\delta>0$ and a map $f\in C(\mathbb{S}^1)$ we say that a sequence of points $\{x_k\}_{k\in \N_0}\subset \mathbb{S}^1$ is a \emph{$\delta$-pseudo orbit}, if $d(f(x_k),x_{k+1})<\delta$ for all $k\in \N_0$. A $\delta$-pseudo orbit is called a \emph{periodic $\delta$-pseudo orbit} if there exists $N\in \N$ such that $x_{k+N}=x_k$ for all $k\in \N_0$.\\
A sequence $\{x_k\}_{k\in \N_0}\subset \mathbb{S}^1$
satisfying $\displaystyle \lim_{k\to\infty} d(f(x_{k}),x_{k+1}) = 0$ is called an \emph{asymptotic pseudo orbit}. \\
If a sequence $\{x_k\}_{k\in \N_0}\subset \mathbb{S}^1$ is a $\delta$-pseudo orbit and an asymptotic pseudo-orbit then we say that it is an \emph{asymptotic $\delta$-pseudo orbit}.
\end{definition}

\begin{definition}\label{def:shadowing}
	We say that a map $f\in C(\mathbb{S}^1)$ has the:
	\begin{itemize}
		\item \emph{shadowing property} if for every $\eps > 0$ there exists $\delta >0$ satisfying the following condition: given a $\delta$-pseudo orbit $\mathbf{y}=\{y_n\}_{n\in \N_0}$ we can find a corresponding point $x\in \mathbb{S}^1$ which $\eps$-traces $\mathbf{y}$, i.e.,
		$$d(f^n(x), y_n)<  \eps \text{ for every } n\in \N_0.$$
		\item
		\emph{periodic shadowing property} if for every $\eps>0$ there exists $\delta>0$ satisfying the following condition: given a periodic $\delta$-pseudo orbit $\mathbf{y}=\{y_n\}_{n\in\N_0}$ we can find a corresponding periodic point $x \in \mathbb{S}^1$, which $\eps$-traces $\mathbf{y}$.
		\item \emph{limit shadowing} if for every asymptotic pseudo orbit $\{x_n\}_{n\in \N_0}\subset \mathbb{S}^1$ there exists $p\in \mathbb{S}^1$ such that
		$$d(f^n(p),x_n)\to 0 \text{ as } n\to \infty.$$
		\item \emph{s-limit shadowing} if for every $\eps>0$ there exists $\delta>0$ so that
		\begin{enumerate}
			\item  for every $\delta$-pseudo orbit $\mathbf{y}=\{y_n\}_{n\in \N_0}$ we can find a corresponding point $x\in \mathbb{S}^1$ which $\eps$-traces $\mathbf{y}$,
			\item  for every asymptotic $\delta$-pseudo orbit $\mathbf{y}=\{y_n\}_{n\in \N_0}$ of $f$, there is $x\in \mathbb{S}^1$ which $\eps$-traces $\mathbf{y}$ and
			$$ \lim_{n\to \infty}d(y_n,f^n(x)) = 0.$$
		\end{enumerate}
	\end{itemize}
\end{definition}

\begin{remark}\label{rem:implies}
Note that s-limit shadowing implies both classical and limit shadowing.
\end{remark}

\subsection{Lebesgue measure preserving circle maps}
Consider a continuous map $f\colon~\mathbb S^1\to \mathbb S^1$ of
\emph{degree} $\degr(f)\in\Z$. Let $\tilde F\colon~\R\to\R$ be a \emph{lifting} of $f$, i.e., the continuous map
for which
\begin{equation}\label{e:10}\phi\circ \tilde F = f\circ\phi\text{ on }\R,
\end{equation}
where $\phi\colon~\R\to\Ci$ is defined by $\phi(x)= e^{2\pi ix}$. Then $\tilde F(x + 1) = \tilde F(x) + \degr(f)$ for each $x\in\R$.  If $F=\tilde F\vert [0,1)(\mo~ 1)$, we say that $F\colon~[0,1)\to [0,1)$ \emph{represents} $f$. Note that since two liftings of $f$ differ by a integer constant, $F$ does not depend on a concrete choice of a lifting of $f$.

In what follows the set of all liftings, resp.\  representatives of
{\it onto} circle maps will be denoted $\tilde \F(\R)$, resp.\  $\F([0,1))$.

\begin{remark}\label{r:1}One can easily see that a circle map $f$ is onto if and only if its representative $F=\tilde F\vert [0,1)(\mo~ 1)$ is onto.
\end{remark}

Let  $\tilde\lambda$ denote the \emph{normalized Lebesgue measure} on $\Ci$ and $\B$ the \emph{Borel sets} in $\Ci$. In this paper we will work with \emph{continuous maps from $\Ci$ into $\Ci$ preserving the measure $\tilde\lambda$}, which we denote
$$ C_{\tilde\lambda}(\Ci)=\{f\colon~\Ci\to \Ci\colon~\forall
A\in\B,~\tilde\lambda(A)=\tilde\lambda(f^{-1}(A))\}. $$
We consider the set $C_{\tilde\lambda}(\Ci)$ equipped with the \emph{uniform metric} $\rho$:
$$\rho (f,g) := \sup_{x \in \Ci} |f(x) - g(x)|.$$ We leave the standard proof of the following fact to the reader.

\begin{proposition}\label{p:1}$(C_{\tilde\lambda}(\Ci),\rho)$  is a complete metric space. \end{proposition}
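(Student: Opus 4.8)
The plan is to verify the two standard components of completeness separately: that $C_{\tilde\lambda}(\Ci)$ is a metric space under $\rho$ (which is immediate once we check $\rho$ is finite and separates points), and that it is complete, i.e., every $\rho$-Cauchy sequence has a limit which again lies in $C_{\tilde\lambda}(\Ci)$. Since $\Ci$ is compact, $\rho$ is finite on all of $C(\Ci)$ and coincides with the usual sup-metric; it is classical that $(C(\Ci),\rho)$ is complete because a uniformly Cauchy sequence of continuous maps converges uniformly to a continuous map (one uses that uniform convergence preserves continuity, together with the fact that the sup is attained on the compact domain so the target values stay in $\Ci$). Thus the only genuine content is: the subset $C_{\tilde\lambda}(\Ci) \subseteq C(\Ci)$ is closed.

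So the heart of the argument is to show that if $f_n \in C_{\tilde\lambda}(\Ci)$ and $f_n \to f$ uniformly, then $f$ preserves $\tilde\lambda$. The cleanest route is via the change-of-variables / pushforward characterization: $f$ preserves $\tilde\lambda$ iff $(f_n)_*\tilde\lambda \to f_*\tilde\lambda$ weakly implies $f_*\tilde\lambda = \tilde\lambda$, and for this I would use test functions. Concretely, for any $g \in C(\Ci)$, the composition $g\circ f_n \to g\circ f$ uniformly (by uniform continuity of $g$ on the compact space $\Ci$ and uniform convergence of $f_n$ to $f$), hence $\int g\circ f_n \, d\tilde\lambda \to \int g\circ f \, d\tilde\lambda$. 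But $f_n$ preserves $\tilde\lambda$ means $\int g\circ f_n\, d\tilde\lambda = \int g\, d\tilde\lambda$ for all $g$, so passing to the limit gives $\int g\circ f\, d\tilde\lambda = \int g\, d\tilde\lambda$ for every $g\in C(\Ci)$. By the Riesz representation theorem (density of continuous functions, or the fact that a Borel measure on a compact metric space is determined by its integrals against continuous functions), this identity for all continuous $g$ forces $f_*\tilde\lambda = \tilde\lambda$, i.e. $\tilde\lambda(f^{-1}(A)) = \tilde\lambda(A)$ for all $A\in\B$. This shows $C_{\tilde\lambda}(\Ci)$ is closed in the complete space $(C(\Ci),\rho)$, hence itself complete.

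I do not expect a serious obstacle here; the one point requiring a little care is the interchange of limit and integral, which is justified either by uniform convergence of $g\circ f_n$ on a finite-measure space (dominated convergence, with the constant bound $\sup|g|$) or directly by the estimate $\bigl|\int g\circ f_n\,d\tilde\lambda - \int g\circ f\,d\tilde\lambda\bigr| \le \sup_{x\in\Ci}|g(f_n(x)) - g(f(x))| \to 0$. A second routine point is to confirm that the uniform limit $f$ is indeed a continuous self-map of $\Ci$ (so that it makes sense to ask whether it preserves $\tilde\lambda$): this is the standard fact that $C(\Ci)$ is closed under uniform limits, and that the image of $\Ci$ under each $f_n$ lies in $\Ci$ forces the limit to map into $\Ci$ as well. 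Assembling these observations gives the claimed completeness, and as the statement itself notes, the proof is routine, so I would present it in this condensed form rather than spelling out every $\eps$.
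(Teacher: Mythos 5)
Your argument is correct: the paper explicitly leaves this proof to the reader as standard, and your route (completeness of $(C(\Ci),\rho)$ plus closedness of $C_{\tilde\lambda}(\Ci)$, verified by testing $\int g\circ f_n\,d\tilde\lambda=\int g\,d\tilde\lambda$ against all $g\in C(\Ci)$ and passing to the uniform limit) is exactly the standard argument the authors have in mind. The only cosmetic slip is the garbled sentence about weak convergence of $(f_n)_*\tilde\lambda$; the concrete estimate you give immediately afterwards is what actually carries the proof, so nothing of substance is missing.
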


The next lemma describes elements of $\F([0,1))$ representing maps from $C_{\tilde\lambda}(\Ci)$. We denote by $\lambda$ the  Lebesgue measure on $[0,1)$.

\begin{lemma}\label{l:3}Let $F\in \F([0,1))$ represent $f\colon~\Ci\to\Ci$. The following conditions are equivalent.
\begin{itemize}
\item[(i)] $f\in C_{\tilde\lambda}(\Ci)$.
\item[(ii)] $\forall A\subset [0,1)\text{ Borel },~\lambda(A)=\lambda(F^{-1}(A))$.
    \end{itemize}
 \end{lemma}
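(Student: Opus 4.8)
The plan is to transport the measure-preserving condition across the standard identification of $[0,1)$ with $\Ci$ furnished by the exponential map. Set $\psi := \phi|_{[0,1)}\colon [0,1)\to\Ci$, $\psi(x)=e^{2\pi i x}$. I would first record three elementary facts: $\psi$ is a bijection; $\psi$ is a Borel isomorphism, meaning both $\psi$ and $\psi^{-1}$ send Borel sets to Borel sets (because $\psi$ is continuous and $\psi^{-1}$ is continuous on $\Ci\setminus\{1\}$); and $\psi$ pushes $\lambda$ forward to $\tilde\lambda$, i.e.\ $\lambda(\psi^{-1}(B))=\tilde\lambda(B)$ for every Borel $B\subseteq\Ci$, which is just the normalization defining $\tilde\lambda$. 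Injectivity of $\psi$ then upgrades the last fact to $\lambda(A)=\tilde\lambda(\psi(A))$ for every Borel $A\subseteq[0,1)$, and as $A$ ranges over the Borel subsets of $[0,1)$ the image $\psi(A)$ ranges over all Borel subsets of $\Ci$.

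Next I would rewrite the defining relation \eqref{e:10} in terms of $\psi$ and $F$. Since $\phi$ has period $1$ and $F(x)\equiv\tilde F(x)\ (\mathrm{mod}\ 1)$ for $x\in[0,1)$, we have $\psi(F(x))=\phi(\tilde F(x))=f(\phi(x))=f(\psi(x))$, that is $\psi\circ F=f\circ\psi$ on $[0,1)$. (In passing, $F$ is Borel measurable, being continuous followed by reduction $\mathrm{mod}\ 1$, so $F^{-1}(A)$ is Borel whenever $A$ is; this is what makes condition (ii) well posed.) Hence, for any Borel $A\subseteq[0,1)$,
\[ F^{-1}(A)=F^{-1}\big(\psi^{-1}(\psi(A))\big)=(\psi\circ F)^{-1}(\psi(A))=(f\circ\psi)^{-1}(\psi(A))=\psi^{-1}\big(f^{-1}(\psi(A))\big). \]
Applying the push-forward identity with $B=\psi(A)$ and with $B=f^{-1}(\psi(A))$ yields $\lambda(A)=\tilde\lambda(\psi(A))$ and $\lambda(F^{-1}(A))=\tilde\lambda\big(f^{-1}(\psi(A))\big)$.

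Putting these together, $\lambda(A)=\lambda(F^{-1}(A))$ holds for all Borel $A\subseteq[0,1)$ if and only if $\tilde\lambda(B)=\tilde\lambda(f^{-1}(B))$ holds for all $B=\psi(A)$; and since $\psi$ is a Borel isomorphism, this last condition quantifies over exactly the Borel subsets of $\Ci$, which is precisely the statement that $f\in C_{\tilde\lambda}(\Ci)$. This establishes (i)$\Leftrightarrow$(ii). The only point that is not purely formal is the verification that $\psi$ is a measure-preserving Borel isomorphism and that $F$ is Borel measurable; both are standard and essentially built into the definitions of $\tilde\lambda$ and of the representative $F$, so I do not anticipate a real obstacle here — the lemma is genuinely a bookkeeping statement, and the care is only in making the change of variables along $\psi$ explicit.
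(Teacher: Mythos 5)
Your proposal is correct and follows essentially the same route as the paper: both rest on the continuous bijection $\psi=\phi|_{[0,1)}$, the conjugacy $\psi\circ F=f\circ\psi$ on $[0,1)$ derived from \eqref{e:10}, and the fact that $\psi$ carries Borel sets to Borel sets with $\lambda(A)=\tilde\lambda(\psi(A))$, so that the two measure-preservation conditions translate into one another. The extra care you take in spelling out the Borel measurability of $F$ and the push-forward identity is harmless and matches what the paper leaves implicit.
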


\begin{proof}Let us assume that $\tilde F$ is a lifting of $f$ and denote $\psi=\phi\vert [0,1)$. Then $\psi$ is a continuous bijection. From (\ref{e:10}) we get
\begin{equation}\label{e:11}\psi\circ F = f\circ\psi\text{ on }[0,1).
\end{equation}
Moreover, $A\subset [0,1)$ and $\tilde A:=\psi(A)\subset \Ci$ are simultaneously Borel and
\begin{equation}\label{e:12}\lambda(A)=\tilde\lambda(\tilde A).
\end{equation}
Assuming (i), using (\ref{e:11}) and (\ref{e:12}) we can write
$$\lambda(A)=\lambda(\psi^{-1}(\tilde A))=\lambda(\psi^{-1}(f^{-1}(\tilde A)))=\lambda(F^{-1}(\psi^{-1}(\tilde A)))=\lambda(F^{-1}(A)).
$$
This shows that the statements (i) implies (ii). If (ii) is true we can write
\begin{align*}&\tilde\lambda(\tilde A)=\lambda(A)=\lambda(F^{-1}(A))=\lambda(F^{-1}(\psi^{-1}(\tilde A)))=\lambda(\psi^{-1}(f^{-1}(\tilde A)))=\tilde\lambda(f^{-1}(\tilde A)),
\end{align*}
so (ii) implies (i).
\end{proof}

We say that a map from $\F([0,1))$ is \emph{piecewise affine} if it has finitely many affine pieces of monotonicity. We will say that $\tilde F\in\tilde \F(\R)$ is \emph{piecewise affine} if its corresponding representative $\tilde F\vert[0,1))$ is piecewise affine.
In general, maps from $\F([0,1))$ are not continuous but they can be piecewise monotone and smooth or even piecewise affine.  For these cases the following lemma states a useful criterium about when an element $F$ of $\F([0,1))$ represents $f\in C_{\tilde\lambda}(\Ci)$.

\begin{lemma}\label{l:6}
Let $F\in \F([0,1))$ be a piecewise affine representative with nonzero slopes and such that its derivative does not exist at a finite set $E$. Then the properties (i) and (ii) from Lemma \ref{l:3} are equivalent to the property
\begin{equation}\label{e:3}
   \forall~y\in [0,1)\setminus F(E)\colon~\sum_{x\in F^{-1}(y)}\frac{1}{\vert F'(x)\vert}=1.
 \end{equation}
 \end{lemma}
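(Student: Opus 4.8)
The plan is to exploit the local structure of a piecewise affine map to compute $\lambda(F^{-1}(A))$ explicitly and to compare it with $\lambda(A)$, thereby showing that the change-of-variables identity \eqref{e:3} is precisely the pointwise density condition for measure preservation. First I would fix notation: let $0 = c_0 < c_1 < \dots < c_m = 1$ be the breakpoints, so that $E \subseteq \{c_1, \dots, c_{m-1}\}$, and on each interval $J_i := (c_{i-1}, c_i)$ the map $F$ is affine with nonzero slope $s_i$. Since $F(E)$ together with the images of the endpoints is a finite set, it suffices to work on $[0,1) \setminus G$ where $G$ is this finite (hence $\lambda$-null) set. For a subinterval $[a,b) \subseteq [0,1) \setminus G$ small enough that it is covered by finitely many ``regular values'', the preimage $F^{-1}([a,b))$ is a finite disjoint union of intervals, one inside each $J_i$ meeting the fibre, each of length $(b-a)/|s_i|$, and the indices $i$ occurring are exactly those with $y \in F(J_i)$ for $y \in (a,b)$. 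Hence $\lambda(F^{-1}([a,b))) = (b-a)\sum_{i : (a,b)\subseteq F(J_i)} 1/|s_i|$, and for $y$ in such an interval the sum $\sum_{x \in F^{-1}(y)} 1/|F'(x)|$ equals exactly that same sum over $i$.

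Next I would assemble these local computations into the two implications. For \eqref{e:3} $\Rightarrow$ (ii): partition $[0,1)$, up to the null set $F^{-1}(G) \cup G$ (note $F^{-1}(G)$ is finite since each fibre is finite), into countably many maximal intervals on which the index set $\{i : y \in F(J_i)\}$ is constant; on each such interval \eqref{e:3} forces $\lambda(F^{-1}(\cdot))$ and $\lambda(\cdot)$ to agree, and then a routine monotone-class / outer-regularity argument upgrades the equality from intervals to all Borel sets. For (ii) $\Rightarrow$ \eqref{e:3}: fix $y_0 \in [0,1) \setminus F(E)$ and choose a small interval $(a,b) \ni y_0$ avoiding $F(E)$ and the images of the breakpoints, and so short that the index set is constant on it and equals $\{i : y_0 \in F(J_i)\}$; applying (ii) to $A = (a,b)$ gives $b - a = \lambda(F^{-1}((a,b))) = (b-a)\sum_{i}1/|s_i|$, so $\sum_{x \in F^{-1}(y_0)} 1/|F'(x)| = \sum_i 1/|s_i| = 1$. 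Combining this with Lemma \ref{l:3} (which already gives (i) $\Leftrightarrow$ (ii)) completes the chain of equivalences.

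The main obstacle I expect is purely bookkeeping rather than conceptual: one must be careful about the finitely many ``bad'' values — images of breakpoints, values in $F(E)$, and the (finite) set of points whose fibre contains a breakpoint — and verify that the fibre $F^{-1}(y)$ genuinely has locally constant cardinality with the slopes $|F'(x)|$ locally constant as $x$ ranges over the fibre, so that the finite sum in \eqref{e:3} is locally constant in $y$ off a null set. Once this local triviality is established, both implications are essentially the one-dimensional change-of-variables formula for piecewise affine maps, and the extension from intervals to Borel sets is standard. I would also remark that the hypothesis of nonzero slopes is exactly what guarantees each affine branch is a bijection onto its image with $\lambda$-measure scaled by $1/|s_i|$, so no branch can contribute a positive-measure fibre or collapse an interval to a point.
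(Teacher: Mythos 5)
Your proposal is correct, and it rests on the same basic fact as the paper's proof — that for a piecewise affine map with nonzero slopes, $\lambda(F^{-1}(J))=\lambda(J)\sum_i 1/|s_i|$ for any small interval $J$ of regular values, the sum running over the branches whose image contains $J$ — but you organize the two implications differently. For (ii) $\Rightarrow$ \eqref{e:3} the paper computes the limit of $\lambda(F^{-1}(J(y,\eps)))/\lambda(J(y,\eps))$ as $\eps\to 0_+$ and identifies it with the fibre sum, whereas you avoid the limit by exploiting that the branch index set is constant on a sufficiently small interval around $y_0$, giving the identity exactly; these are essentially the same computation. The real divergence is in \eqref{e:3} $\Rightarrow$ (ii): the paper argues by contradiction, extracting from a failure of (ii) a closed interval on which the ratio $\lambda(J_0)/\lambda(F^{-1}(J_0))$ is bounded away from $1$ and then nesting down to a single regular value at which \eqref{e:3} fails, while you argue directly, decomposing the value space (off a finite bad set) into finitely many intervals of constant branch set, verifying measure preservation on subintervals there, and upgrading to all Borel sets by a standard $\pi$-system/monotone-class argument. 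Your route is more constructive and makes the extension step explicit (including the observation that the finite bad set and its finite preimage are null), at the cost of the bookkeeping you yourself flag; the paper's nested-interval argument is shorter but leaves the halving step and the avoidance of $F(E)$ by the limit point implicit. One caveat, at the same level of informality as the paper itself: a point $y_0\notin F(E)$ can still be a one-sided limit value of a branch image (e.g.\ the value $F(0)$ coming from the wrap-around of the $\bmod\,1$ representative), in which case a two-sided interval around $y_0$ avoiding all such values does not exist and the branch set is only one-sidedly constant; this affects finitely many values and is resolved by taking one-sided intervals or by passing to the circle, so it does not constitute a gap.
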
\begin{proof}By the hypothesis the set $F(E)$ is finite and for each $y\in (0,1)\setminus F(E)$ we can write for $J(y,\eps)=[y-\eps,y+\eps]$
\begin{equation*}\label{e:13}\lim_{\eps\to 0_+}\sum_{K\in\comp(F^{-1}(J(y,\eps)))}\frac{\lambda(K)}{\lambda(J(y,\eps))}=\sum_{x\in F^{-1}(y)}\frac{1}{\vert F'(x)\vert},\end{equation*}
thus Lemma \ref{l:3}(ii) implies (\ref{e:3}). 

For the other direction, assuming that property (ii) is not true, one can find some closed interval $J_0\subset [0,1)\setminus F(E)$ and $\delta>0$ for which
\begin{equation}\label{e:14}\frac{\lambda(J_0)}{\lambda(F^{-1}(J_0))}\in (0,1-\delta)\cup (1+\delta,\infty);\end{equation} then proceeding inductively 
we can detect a nested sequence $J_0\supset J_1\supset \cdots $ of closed intervals fulfilling (\ref{e:14}) and $\bigcap_{i=0}^{\infty} J_i=\{y\}$ 
with $y\in (0,1)\setminus F(E)$. By (\ref{e:14}), equation (\ref{e:3}) fails in such $y$. It shows that (\ref{e:3}) implies Lemma \ref{l:3}(ii).
 \end{proof}
 
\section{The proof}\label{sec:main}

\subsection{Outline of the proof}\label{sec:outline}
The proof of our main result, Theorem \ref{thm:main}, relies on four rather technical steps. The first step is treated in Lemmas \ref{l:7}, \ref{l:31} and  \ref{l:5} and consists of the construction of a special dense subset $C_{\tilde\lambda,0}(\Ci)$ of $C_{\tilde\lambda}(\Ci)$. Let $\Q_{\pi} := \Q + \pi$. The maps in $C_{\tilde\lambda,0}(\Ci)$ are piecewise affine 
and every map $g$ from $C_{\tilde\lambda,0}(\Ci)$ fulfills the key property 
\begin{equation}\label{e:uffo}g(\phi(\Q_{\pi}))\subset \phi(\Q).\end{equation} 
In particular, the maps in $C_{\tilde\lambda,0}(\Ci)$ have all points of discontinuity of  derivatives in $\phi(\Q_{\pi})$ so the equation (\ref{e:uffo}) applies.       
In the second step in Lemmas \ref{l:1} and \ref{l:2} we twice perturb  maps from $C_{\tilde\lambda,0}(\Ci)$  to obtain maps satisfying the list of conditions (C\ref{C1})-(C\ref{C5}) from Subsection~\ref{sec:PartitionsSpecialPerturbations}. Applying in Subsection \ref{sec:MainProof} both perturbations and also the result of Lemma \ref{l:4} on a sequence $\{g_m\}_{m\ge 1}$ dense in $C_{\tilde\lambda,0}(\Ci)$, we arrive to new sequences $\{\theta_m\}_{m\ge 1}$ of maps from $C_{\tilde\lambda}(\Ci)$, their neighborhoods $\{U_m\}_{m\ge 1}$ and also carefully constructed partitions $\{\PQ_m\}_{m\ge 1}$. In particular, using (\ref{e:uffo}) we can ensure that for some pairs $m<n$, $\PQ_n$ is a refinement of $\PQ_m$. The final step consists of the proof that all maps in 
$$A=\bigcap_{n\ge 1}\bigcup_{m\ge n}U_m$$
have the s-limit shadowing property.

\subsection{Particular families of Lebesgue measure preserving circle maps and its representatives}\label{sec:SpecialFamilies}
In this subsection we will define particular families of Lebesgue measure preserving circle maps that we will apply later for the construction of partitions needed for the proof of genericity of s-limit shadowing property in our context.

For a piecewise affine map $\tilde F\in\tilde \F(\R)$ (i.e. $\tilde F\vert[0,1))$ is piecewise affine) we denote by $T(\tilde F)$, resp.\ $D(\tilde F)$ the \emph{turning points}, resp.\ the \emph{set of points of discontinuity of derivative} of $\tilde F$. We also put $T_{[0,1]}(\tilde F)=T(\tilde F)\cap [0,1]$ and denote
$$\Q_{\pi}=\{r+\pi\colon~r\in\Q\}.
$$

Let us recall our convention that is stated before Remark \ref{r:1}; the set $\tilde\F(\R)$ consists of liftings of {\it onto} circle maps. Let $\tilde\F_0(\R)\subset\tilde\F(\R)$ be defined as
\begin{equation}\label{e:41}
\tilde F\in\tilde\F_0(\R)~\equiv~
\begin{cases}
\text{(i)}~\tilde F\vert[0,1)\text{ is piecewise affine with nonzero slopes},\\
\text{(ii) for}~S=\{0\}\cup F(T_{[0,1]}(\tilde F)),~~D(\tilde F)\cap [0,1]= F^{-1}(S)\subset \Q_{\pi}.

\end{cases}
\end{equation}

Since the set $\Q_{\pi}$ is dense in $\R$, we have the following lemma.
\begin{lemma}\label{l:7}The set $\tilde \F_0(\R)$ is dense in $\tilde \F(\R)$.
\end{lemma}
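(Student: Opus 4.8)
The plan is to verify the two defining conditions in~\eqref{e:41} can be met arbitrarily closely. Fix $\tilde F\in\tilde\F(\R)$ (a lifting of an onto circle map) and $\eps>0$. First I would use a standard approximation result for continuous circle maps: every continuous onto map of the circle can be uniformly approximated by piecewise affine onto maps with nonzero slopes, hence every lifting $\tilde F$ admits a piecewise affine lifting $\tilde G_0$ with $\|\tilde F-\tilde G_0\|_\infty<\eps/2$, with $\tilde G_0\vert[0,1)$ piecewise affine, all slopes nonzero, and the same degree as $\tilde F$ (so that $\tilde G_0$ is still a lifting of an onto map). This gives condition (i) of~\eqref{e:41}; the work is to perturb $\tilde G_0$ to a nearby $\tilde G$ that also satisfies (ii) while keeping (i).

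The heart of the argument is a finite inductive perturbation supported near the finitely many turning points. Write $T_{[0,1]}(\tilde G_0)=\{t_1,\dots,t_k\}$ and let $S_0=\{0\}\cup G_0(T_{[0,1]}(\tilde G_0))$. I want to move each turning point $t_j$ slightly, and also slightly tilt the affine pieces meeting at $t_j$, so that after the perturbation the value $G(t_j)$ lands in $\Q_\pi$ and, simultaneously, every point of $G^{-1}(S)$ (with $S=\{0\}\cup G(T_{[0,1]}(\tilde G))$) lies in $\Q_\pi$. Since $\Q_\pi=\Q+\pi$ is dense and the construction involves only finitely many linear constraints, this is arrangeable: adjust the turning values $G(t_j)$ one at a time to be elements of $\Q_\pi$ (a dense set), then observe that the finitely many preimages of the finite set $S$ under a piecewise affine map with rational-type data can be pushed into $\Q_\pi$ by an arbitrarily small further tilt of slopes and shift of breakpoints, because each such preimage depends continuously and (piecewise) affinely on the finitely many parameters and $\Q_\pi$ is dense in each relevant interval. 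Throughout, one keeps slopes nonzero (an open condition) and keeps the degree fixed and the map onto, and keeps $\|\tilde G_0-\tilde G\|_\infty<\eps/2$, so that $\|\tilde F-\tilde G\|_\infty<\eps$. One also needs the breakpoints $D(\tilde G)\cap[0,1]$ to equal exactly $G^{-1}(S)$ (no spurious breaks and no accidental cancellation); this is generic among the perturbations, so it can be secured.

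The main obstacle I anticipate is the bookkeeping of the condition $D(\tilde F)\cap[0,1]=F^{-1}(S)\subset\Q_\pi$: one must make \emph{all} points where the derivative is discontinuous — both the turning points themselves and the other preimages of turning values — rational-of-type $\Q_\pi$, and these two requirements interact, since moving a turning point changes $S$ and hence changes the set $G^{-1}(S)$ whose elements must also be controlled. The way around this is to order the perturbations: first fix the (finitely many) turning points and their images to rational values in $\Q_\pi$, thereby freezing $S$; then, with $S$ fixed, the remaining preimages $G^{-1}(S)\setminus T_{[0,1]}(\tilde G)$ are finitely many points each lying in the interior of an affine piece, and a final arbitrarily small perturbation of that piece (rotating it about a point not in $G^{-1}(S)$, or translating it) moves those preimages into $\Q_\pi$ without disturbing the already-placed turning data, because on each affine piece the map is a bijection onto its image. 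Since only finitely many pieces and finitely many points are involved, finitely many such adjustments suffice, and the total perturbation stays below $\eps/2$. This produces $\tilde G\in\tilde\F_0(\R)$ with $\|\tilde F-\tilde G\|_\infty<\eps$, proving density.
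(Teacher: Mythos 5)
Your plan has a genuine gap at the point where you write that the equality $D(\tilde G)\cap[0,1]=G^{-1}(S)$ ``is generic among the perturbations, so it can be secured.'' It is not generic; the inclusion $G^{-1}(S)\subset D(\tilde G)$ is in fact the opposite of generic. A preimage of a turning value (or of $0$) that lies in the interior of an affine piece is a point where the derivative is continuous, so for a typical piecewise affine approximation --- and for a typical small perturbation of it --- this inclusion fails. Your construction only \emph{moves} the points of $G^{-1}(S)$ into $\Q_{\pi}$; it never makes them points of non-differentiability, which is what condition (ii) of (\ref{e:41}) demands. Achieving this requires deliberately inserting slope breaks (in the paper: the small zigzag modification of Figure~\ref{Fig1}, which turns such a point into a turning point), and every such insertion enlarges $S$, producing new preimages that must in turn be placed in $\Q_{\pi}$ and made breakpoints; the finite induction controlling this feedback is exactly the content of the paper's proof and is absent from yours. (Symmetrically, if your initial approximation has non-turning breakpoints whose images miss $S$, they violate the inclusion $D(\tilde G)\subset G^{-1}(S)$ and must be removed or converted, again not a ``generic'' matter.)

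A second, more concrete obstruction: your preliminary normalization of the turning \emph{values} into $\Q_{\pi}$ (which (\ref{e:41}) does not ask for) makes the final step impossible as stated. If a single affine piece of slope $a$ crosses the level $0$ and two distinct turning values $r_1+\pi$, $r_2+\pi$ (which happens, e.g., for any full branch), the corresponding preimages differ by $(r_1-r_2)/a$ and $(r_1+\pi)/a$; for all three preimages to lie in $\Q_{\pi}$ both differences must be rational, forcing $a\in\Q$ and $a\notin\Q$ simultaneously. Hence no translation or rotation of that piece can place all of its $S$-preimages in $\Q_{\pi}$; one must subdivide the piece at those preimages with new breakpoints --- precisely the insertion your argument omits. (Rigidly rotating or translating one affine piece also changes its endpoint values, breaking continuity with the neighbouring pieces or disturbing the turning data you had just frozen.) The paper avoids all of this by building the approximation so that \emph{every} crossing of a level of $S$ is from the start a breakpoint located in $\Q_{\pi}$, and then eliminating the finitely many remaining discrepancies one at a time via the Figure~\ref{Fig1} surgery; your proposal needs to be reworked along those lines.
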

\begin{proof}Fix $\hat F\in\tilde\F(\R)$ and $\eps>0$. Clearly there exists a piecewise affine map $\tilde F\in\tilde\F(\R)$  with nonzero slopes such that \begin{itemize}
\item $\sup_{x\in [0,1]}\vert \hat F(x)-\tilde F(x)\vert<\eps$,
\item $\hat F(1)-\hat F(0)=\tilde F(1)-\tilde F(0)$,
\item $T(\tilde F)\subset\Q_{\pi}$,
\item for $F=\tilde F\vert [0,1)(\mo~ 1)$, if $x\in (0,1)$ satisfies $F(x)\in S$ then $x\in D(\tilde F)\cap\Q_{\pi}$, so $(D(\tilde F)\cap [0,1])\supset F^{-1}(S)$.
\end{itemize}

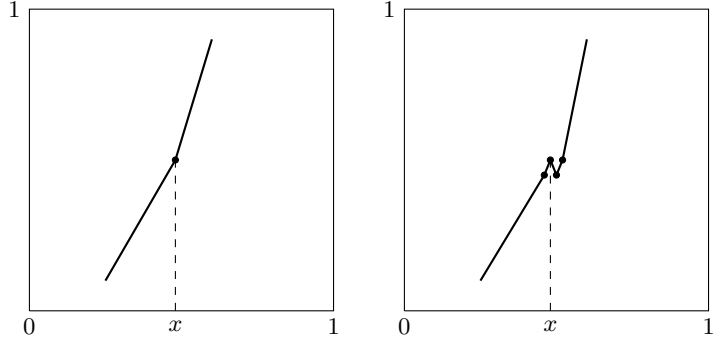
\begin{figure}[!ht]
	\centering
	\begin{tikzpicture}[scale=4]
	\draw (0,0)--(0,1)--(1,1)--(1,0)--(0,0);
	\draw[dashed] (0.48,0)--(0.48,1/2);
	\draw[thick] (1/4,1/10)--(0.48,1/2)--(6/10,9/10);
	\draw[fill] (0.48,1/2) circle (0.01);
	\node at (0.48,-0.05) {\small $x$};
	\node at (-0.05,1) {\small $1$};
	\node at (0,-0.05) {\small $0$};
	\node at (1,-0.05) {\small  $1$};
	\end{tikzpicture}
	\hspace{0,1cm}
	\begin{tikzpicture}[scale=4]
	\draw (0,0)--(0,1)--(1,1)--(1,0)--(0,0);
	\draw[dashed] (0.48,0)--(0.48,1/2);
	\draw[thick] (1/4,1/10)--(0.46,9/20)--(0.48,1/2)--(0.5,9/20)--(0.52,1/2)--(6/10,9/10);
	\draw[fill] (0.48,1/2) circle (0.01);
	\draw[fill] (0.46,9/20) circle (0.01);
	\draw[fill] (0.50,9/20) circle (0.01);
	\draw[fill] (0.52,1/2) circle (0.01);
	\node at (0.48,-0.05) {\small $x$};
	\node at (-0.05,1) {\small $1$};
	\node at (0,-0.05) {\small $0$};
	\node at (1,-0.05) {\small  $1$};
	\end{tikzpicture}
	\caption{Adjustments from the proof of Lemma~\ref{l:7}.}\label{Fig1}
\end{figure}

Notice that for a piecewise affine $\tilde F$ the set $(D(\tilde F)\cap [0,1])\setminus F^{-1}(S)$ has to be either empty or finite. For any $x\in (D(\tilde F)\cap [0,1])\setminus F^{-1}(S)$ we can proceed in two steps. First, 
we modify the graph of $F$ on a small neighbourhood of $x$ as it is shown in Figure~\ref{Fig1}. Second, denoting the new maps $F$, $\tilde F$, arrange both new turning points and also all preimages of their images to be from $D(\tilde F)\cap \Q_{\pi}$ and therefore reduce the number
$$\#(D((\tilde F)\cap [0,1])\setminus F^{-1}(S)).
$$
Repeating the described modification finitely many times, we fulfill (\ref{e:41})(ii), i.e., $\tilde F\in \tilde \F_0(\R)$.
\end{proof}

Consider a lifting $\tilde F\in\tilde F_0(\R)$ introduced by (\ref{e:41}) and the corresponding representative $F=\tilde F\vert [0,1)(\mo~1)$. Define the \emph{outer homeomorphism} $h:[0,1]\to [0,1]$~by
\begin{equation}\label{e:2}
h(0)=0\text{ and }h(x)=\lambda(F^{-1}((0,x))),~x\in (0,1].
\end{equation}

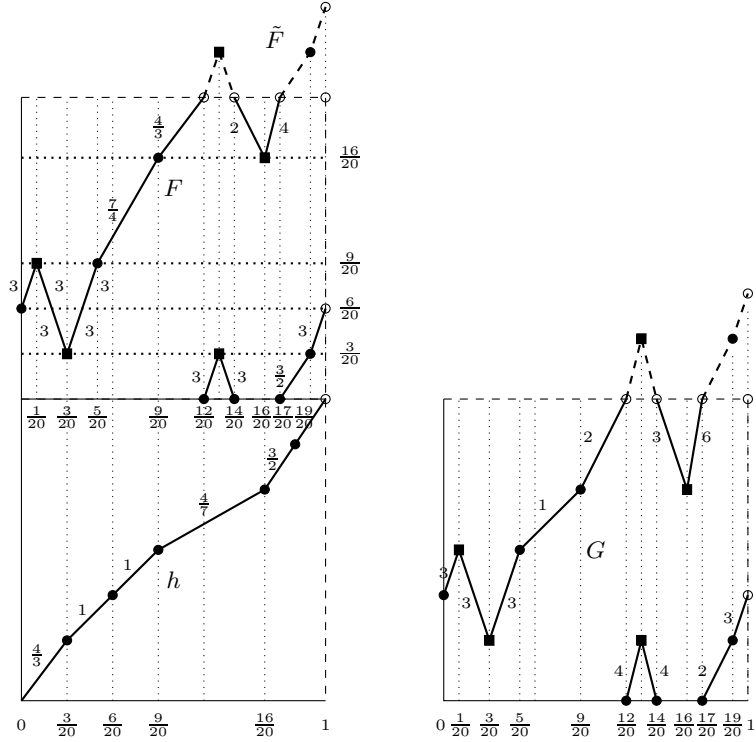
\begin{figure}[!ht]
	\centering
	\begin{tikzpicture}[scale=4]
	\draw (0,1)--(0,0)--(1,0);
	\draw[dashed] (0,1)--(1,1)--(1,0);
	\draw[dotted] (1/20,1)--(1/20,2);
	\draw[dotted] (3/20,0)--(3/20,2);
	\draw[dotted] (5/20,1)--(5/20,2);
	\draw[dotted] (6/20,0)--(6/20,2);
	\draw[dotted] (9/20,0)--(9/20,2);
	\draw[dotted] (12/20,0)--(12/20,2);
	\draw[dotted] (13/20,1)--(13/20,2+3/20);
	\draw[dotted] (14/20,1)--(14/20,2);
	\draw[dotted] (16/20,0)--(16/20,2);
	\draw[dotted] (17/20,1)--(17/20,2);
	\draw[dotted] (19/20,1)--(19/20,2+3/20);
	\draw[dotted] (1,1)--(1,2+6/20);
	\draw[dotted,thick] (0,1+3/20)--(1,1+3/20);
	\draw[dotted,thick] (0,1+6/20)--(1,1+6/20);
	\draw[dotted,thick] (0,1+9/20)--(1,1+9/20);
	\draw[dotted,thick] (0,1+16/20)--(1,1+16/20);
	\draw (1,1)--(0,1)--(0,2);
	\draw [dashed] (0,2)--(1,2)--(1,1);
	\node at (1/2,1.7){\small $F$};
	\node at (5/6,2.2){\small $\tilde F$};
	\draw[thick] (0,0)--(3/20,4/20)--(9/20,10/20)--(16/20,7/10)--(1,1);
	\draw[thick] (0,6/20+1)--(1/20,9/20+1)--(3/20,3/20+1)--(5/20,9/20+1)--(9/20,16/20+1)--(12/20,1+1);
	\draw[dashed,thick] (12/20,1+1)--(13/20,23/20+1)--(14/20,1+1);
	\draw[dashed,thick] (17/20,1+1)--(19/20,23/20+1)--(1,26/20+1);
	\draw[thick] (14/20,1+1)--(16/20,16/20+1)--(17/20,1+1);
	\draw[thick] (12/20,0+1)--(13/20,3/20+1)--(14/20,0+1);
	\draw[thick] (17/20,0+1)--(19/20,3/20+1)--(1,6/20+1);
	\node at (1/20,3/20) {\tiny $\frac{4}{3}$};
	\node at (4/20,6/20) {\tiny $1$};
	\node at (7/20,9/20) {\tiny $1$};
	\node at (12/20,13/20) {\tiny $\frac{4}{7}$};
	\node at (16.5/20,16/20) {\tiny $\frac{3}{2}$};
	\node at (6/20,1+12.5/20) {\tiny $\frac{7}{4}$};
	\node at (17.3/20,1+18/20) {\tiny $4$};
	\node at (14/20,1+18/20) {\tiny $2$};
	\node at (9/20,1+18/20) {\tiny $\frac{4}{3}$};
	\node at (-0.5/20,1+7.5/20) {\tiny $3$};
	\node at (2.5/20,1+7.5/20) {\tiny $3$};
	\node at (5.5/20,1+7.5/20) {\tiny $3$};
	\node at (1.5/20,1+4.5/20) {\tiny $3$};
	\node at (4.5/20,1+4.5/20) {\tiny $3$};
	\node at (18.5/20,1+4.5/20) {\tiny $3$};
	\node at (11.5/20,1+1.5/20) {\tiny $3$};
	\node at (14.5/20,1+1.5/20) {\tiny $3$};
	\node at (17/20,1+1.5/20) {\tiny $\frac{3}{2}$};
	\node at (0,-0.08) {\tiny $0$};
	\node at (1,-0.08) {\tiny $1$};
	\node at (3/20,-0.08) {\tiny $\frac{3}{20}$};
	\node at (6/20,-0.08) {\tiny $\frac{6}{20}$};
	\node at (9/20,-0.08) {\tiny $\frac{9}{20}$};
	\node at (16/20,-0.08) {\tiny $\frac{16}{20}$};
	\node at (1/20,1-0.06) {\tiny $\frac{1}{20}$};
	\node at (3/20,1-0.06) {\tiny $\frac{3}{20}$};
	\node at (5/20,1-0.06) {\tiny $\frac{5}{20}$};
	\node at (9/20,1-0.06) {\tiny $\frac{9}{20}$};
	\node at (12/20,1-0.06) {\tiny $\frac{12}{20}$};
	\node at (14/20,1-0.06) {\tiny $\frac{14}{20}$};
	\node at (15.8/20,1-0.06) {\tiny $\frac{16}{20}$};
	\node at (17.2/20,1-0.06) {\tiny $\frac{17}{20}$};
	\node at (18.6/20,1-0.06) {\tiny $\frac{19}{20}$};
	\node at (1+0.08,1+3/20) {\tiny $\frac{3}{20}$};
	\node at (1+0.08,1+6/20) {\tiny $\frac{6}{20}$};
	\node at (1+0.08,1+9/20) {\tiny $\frac{9}{20}$};
	\node at (1+0.08,1+16/20) {\tiny $\frac{16}{20}$};
	\node at (1/2,2/5){\small $h$};
	\draw[fill] (3/20,1/5) circle (0.015);
	\draw[fill] (6/20,7/20) circle (0.015);
	\draw[fill] (9/20,10/20) circle (0.015);
	\draw[fill] (16/20,14/20) circle (0.015);
	\draw[fill] (18/20,17/20) circle (0.015);
	\draw (1,1) circle (0.015);
	\draw[fill] (12/20,1) circle (0.015);
	\draw[fill] (14/20,1) circle (0.015);
	\draw[fill] (17/20,1) circle (0.015);
	\draw[fill] (19/20,1+3/20) circle (0.015);
	\draw (1,1+6/20) circle (0.015);
	\draw[fill] (0,6/20+1) circle (0.015);
	\draw[fill] (5/20,9/20+1) circle (0.015);
	\draw[fill] (9/20,16/20+1) circle (0.015);
	\draw (12/20,1+1) circle (0.015);
	\draw (14/20,1+1) circle (0.015);
	\draw (17/20,1+1) circle (0.015);
	\draw (1,1+1) circle (0.015);
	\draw[fill] (19/20,23/20+1) circle (0.015);
	\draw (1,2+6/20) circle (0.015);
	\draw [fill] (0.035,1+9.325/20) rectangle (0.065,1+8.675/20);
	\draw [fill] (0.035+2/20,1+9.325/20-6/20) rectangle (0.065+2/20,1+8.675/20-6/20);
	\draw [fill] (0.035+2/20+1/2,1+9.325/20-6/20) rectangle (0.065+2/20+1/2,1+8.675/20-6/20);
	\draw [fill] (0.035+2/20+1/2,1+9.325/20-6/20+1) rectangle (0.065+2/20+1/2,1+8.675/20-6/20+1);
	\draw [fill] (0.035+3/4,1+9.325/20+7/20) rectangle (0.065+3/4,1+8.675/20+7/20);
	\end{tikzpicture}
	\hspace{0,5cm}
	\begin{tikzpicture}[scale=4]
	\draw (0,1)--(0,0)--(1,0);
	\draw[dashed] (0,1)--(1,1)--(1,0);
	\draw[thick] (0,6/20+1/20)--(1/20,9/20+1/20)--(3/20,3/20+1/20)--(5/20,9/20+1/20)--(9/20,13/20+1/20)--(12/20,1);
	\draw[dashed,thick] (12/20,1)--(13/20,23/20+1/20)--(14/20,1);
	\draw[dashed,thick] (17/20,1)--(19/20,23/20+1/20)--(1,26/20+1/20);
	\draw[thick] (14/20,1)--(16/20,13/20+1/20)--(17/20,1);
	\draw[thick] (12/20,0)--(13/20,3/20+1/20)--(14/20,0);
	\draw[thick] (17/20,0)--(19/20,3/20+1/20)--(1,6/20+1/20);
	\draw[fill] (12/20,0) circle (0.015);
	\draw[fill] (14/20,0) circle (0.015);
	\draw[fill] (17/20,0) circle (0.015);
	\draw[dotted] (1/20,0)--(1/20,1);
	\draw[dotted] (3/20,0)--(3/20,1);
	\draw[dotted] (5/20,0)--(5/20,1);
	\draw[dotted] (6/20,0)--(6/20,1);
	\draw[dotted] (9/20,0)--(9/20,1);
	\draw[dotted] (12/20,0)--(12/20,1);
	\draw[dotted] (13/20,0)--(13/20,1+3/20);
	\draw[dotted] (14/20,0)--(14/20,1);
	\draw[dotted] (16/20,0)--(16/20,1);
	\draw[dotted] (17/20,0)--(17/20,1);
	\draw[dotted] (19/20,0)--(19/20,1+3/20);
	\draw[dotted] (1,0)--(1,1+6/20);
	\node at (-0.01,-0.08) {\tiny $0$};
	\node at (1.01,-0.08) {\tiny $1$};
	\node at (1.1/20,-0.08) {\tiny $\frac{1}{20}$};
	\node at (3/20,-0.08) {\tiny $\frac{3}{20}$};
	\node at (5/20,-0.08) {\tiny $\frac{5}{20}$};
	\node at (9/20,-0.08) {\tiny $\frac{9}{20}$};
	\node at (12/20,-0.08) {\tiny $\frac{12}{20}$};
	\node at (14/20,-0.08) {\tiny $\frac{14}{20}$};
	\node at (15.8/20,-0.08) {\tiny $\frac{16}{20}$};
	\node at (17.3/20,-0.08) {\tiny $\frac{17}{20}$};
	\node at (19/20,-0.08) {\tiny $\frac{19}{20}$};
	\draw[fill] (19/20,3/20+1/20) circle (0.015);
	\draw (1,6/20+1/20) circle (0.015);
	\draw[fill] (0,6/20+1/20) circle (0.015);
	\draw[fill] (5/20,9/20+1/20) circle (0.015);
	\draw[fill] (9/20,13/20+1/20) circle (0.015);
	\draw (12/20,1) circle (0.015);
	\draw (14/20,1) circle (0.015);
	\draw (17/20,1) circle (0.015);
	\draw (1,1) circle (0.015);
	\draw[fill] (19/20,23/20+1/20) circle (0.015);
	\draw (1,1+6/20+1/20) circle (0.015);
	\draw [fill] (0.035,9.325/20+1/20) rectangle (0.065,8.675/20+1/20);
	\draw [fill] (0.035+2/20,9.325/20-6/20+1/20) rectangle (0.065+2/20,8.675/20-6/20+1/20);
	\draw [fill] (0.035+2/20+1/2,9.325/20-6/20+1/20) rectangle (0.065+2/20+1/2,8.675/20-6/20+1/20);
	\draw [fill] (0.035+2/20+1/2,9.325/20-6/20+1/20+1) rectangle (0.065+2/20+1/2,8.675/20-6/20+1/20+1);
	\draw [fill] (0.035+3/4,9.325/20+6/20-1/20) rectangle (0.065+3/4,8.675/20+6/20-1/20);
	\node at (17.3/20,17.5/20) {\tiny $6$};
	\node at (14/20,17.5/20) {\tiny $3$};
	\node at (9.5/20,17.5/20) {\tiny $2$};
	\node at (0/20,8.5/20) {\tiny $3$};
	\node at (6.5/20,13/20) {\tiny $1$};
	\node at (1.5/20,6.5/20) {\tiny $3$};
	\node at (4.5/20,6.5/20) {\tiny $3$};
	\node at (11.5/20,2/20) {\tiny $4$};
	\node at (14.5/20,2/20) {\tiny $4$};
	\node at (17/20,2/20) {\tiny $2$};
	\node at (18.7/20,5.5/20) {\tiny $3$};
	\node at (1/2,1/2){\small $G$};
	\end{tikzpicture}
	\caption{On these pictures the numbers along the graph lines represent slopes of respective affinity pieces. On the left upper part of the figure, there is a picture of a graph of a lifting of a non-Lebesgue preserving circle map $\tilde F$ restricted on $[0,1)$ (taking into account the dashed lines) and also of $F$, its corresponding representative (without the dashed lines). On the left lower picture there is a depiction of the corresponding outer homeomorphism $h$. The picture on the right represents a Lebesgue measure preserving map $G$, however the lifting of this map is not from the set $\tilde{\mathcal{F}}_0$ since the maps $\tilde F$ and $F$ do not have their turning points (black squares) and also preimages of images of turning points that are not turning points (black discs) in $\Q_{\pi}$.}\label{fig:original}
\end{figure}

Clearly, by (\ref{e:41}) and Remark \ref{r:1} the map $F$ is surjective with nonzero slopes, $h$ is an increasing continuous piecewise affine function satisfying $h(0)=0$ and $h(1)=1$. In particular, $h$ is a homeomorphism of $[0,1]$. The set of all liftings of maps from $C_{\tilde\lambda}(\Ci)$ will be denoted by $\tilde\F_{\lambda}(\R)$.

 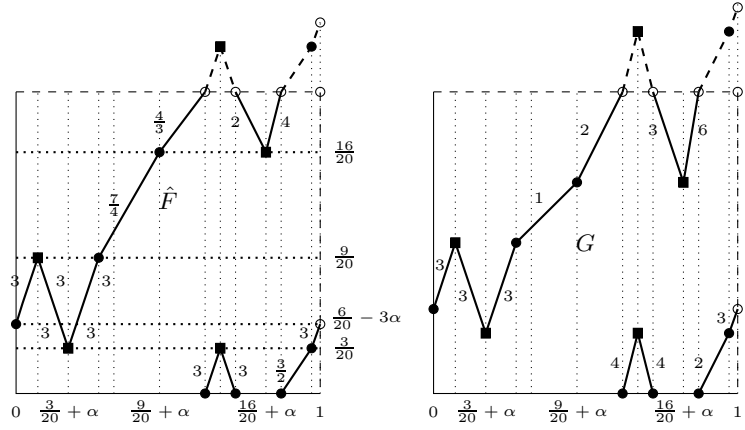
\begin{figure}[!ht]
 	\centering
 	\begin{tikzpicture}[scale=4]
	\draw (0,1)--(0,0)--(1,0);
	\draw[dashed] (0,1)--(1,1)--(1,0);
 	\draw[dotted] (1/20+\alp,0)--(1/20+\alp,1);
 	\draw[dotted] (3/20+\alp,0)--(3/20+\alp,1);
 	\draw[dotted] (5/20+\alp,0)--(5/20+\alp,1);
 	\draw[dotted] (6/20+\alp,0)--(6/20+\alp,1);
 	\draw[dotted] (9/20+\alp,0)--(9/20+\alp,1);
 	\draw[dotted] (12/20+\alp,0)--(12/20+\alp,1);
 	\draw[dotted] (13/20+\alp,0)--(13/20+\alp,1+3/20);
 	\draw[dotted] (14/20+\alp,0)--(14/20+\alp,1);
 	\draw[dotted] (16/20+\alp,0)--(16/20+\alp,1);
 	\draw[dotted] (17/20+\alp,0)--(17/20+\alp,1);
 	\draw[dotted] (19/20+\alp,0)--(19/20+\alp,1+3/20);
 	\draw[dotted] (1,0)--(1,1+0.23);
 	\draw[dotted,thick] (0,3/20)--(1,3/20);
 	\draw[dotted,thick] (0,0.23)--(1,0.23);
 	\draw[dotted,thick] (0,9/20)--(1,9/20);
 	\draw[dotted,thick] (0,16/20)--(1,16/20);	
 	\draw[thick] (0,0.23)--(0+\alp,6/20)--(1/20+\alp,9/20)--(3/20+\alp,3/20)--(5/20+\alp,9/20)--(9/20+\alp,16/20)--(12/20+\alp,1);
 	\draw[dashed,thick] (12/20+\alp,1)--(13/20+\alp,23/20)--(14/20+\alp,1);
 	\draw[dashed,thick] (17/20+\alp,1)--(19/20+\alp,23/20)--(1,1.23);
 	\draw[thick] (14/20+\alp,1)--(16/20+\alp,16/20)--(17/20+\alp,1);
 	\draw[thick] (12/20+\alp,0)--(13/20+\alp,3/20)--(14/20+\alp,0);
 	\draw[thick] (17/20+\alp,0)--(19/20+\alp,3/20)--(1,0.23);
 	\node at (6/20+\alp,12.5/20) {\tiny $\frac{7}{4}$};
 	\node at (17.3/20+\alp,18/20) {\tiny $4$};
 	\node at (14/20+\alp,18/20) {\tiny $2$};
 	\node at (9/20+\alp,18/20) {\tiny $\frac{4}{3}$};
 	\node at (-0.5/20+\alp,7.5/20) {\tiny $3$};
 	\node at (2.5/20+\alp,7.5/20) {\tiny $3$};
 	\node at (5.5/20+\alp,7.5/20) {\tiny $3$};
 	\node at (1.5/20+\alp,4/20) {\tiny $3$};
 	\node at (4.5/20+\alp,4/20) {\tiny $3$};
 	\node at (18.5/20+\alp,4/20) {\tiny $3$};
 	\node at (11.5/20+\alp,1.5/20) {\tiny $3$};
 	\node at (14.5/20+\alp,1.5/20) {\tiny $3$};
 	\node at (17/20+\alp,1.5/20) {\tiny $\frac{3}{2}$};
 	\node at (0,-0.06) {\tiny $0$};
 	\node at (1,-0.06) {\tiny $1$};
 	\node at (3/20+\alp,-0.06) {\tiny $\frac{3}{20}+\alpha$};
 	\node at (9/20+\alp,-0.06) {\tiny $\frac{9}{20}+\alpha$};
 	\node at (16/20+\alp,-0.06) {\tiny $\frac{16}{20}+\alpha$};
 	\node at (1+0.08,3/20) {\tiny $\frac{3}{20}$};
 	\node at (1+0.15,5/20) {\tiny $\frac{6}{20}-3\alpha$};
 	\node at (1+0.08,9/20) {\tiny $\frac{9}{20}$};
 	\node at (1+0.08,16/20) {\tiny $\frac{16}{20}$};
 	\node at (1/2,3/4-0.1) {\small $\hat F$};
 	\draw[fill] (19/20+\alp,3/20) circle (0.015);
 	\draw (1,0.23) circle (0.015);
 	\draw[fill] (0,0.23) circle (0.015);
 	\draw[fill] (5/20+\alp,9/20) circle (0.015);
 	\draw[fill] (9/20+\alp,16/20) circle (0.015);
 	\draw (12/20+\alp,1) circle (0.015);
 	\draw (14/20+\alp,1) circle (0.015);
 	\draw (17/20+\alp,1) circle (0.015);
 	\draw[fill] (12/20+\alp,0) circle (0.015);
 	\draw[fill] (14/20+\alp,0) circle (0.015);
 	\draw[fill] (17/20+\alp,0) circle (0.015);
 	\draw (1,1) circle (0.015);
 	\draw[fill] (19/20+\alp,23/20) circle (0.015);
 	\draw (1,1.23) circle (0.015);
 	\draw [fill] (0.035+\alp,9.325/20) rectangle (0.065+\alp,8.675/20);
 	\draw [fill] (0.035+2/20+\alp,9.325/20-6/20) rectangle (0.065+2/20+\alp,8.675/20-6/20);
 	\draw [fill] (0.035+2/20+1/2+\alp,9.325/20-6/20) rectangle (0.065+2/20+1/2+\alp,8.675/20-6/20);
 	\draw [fill] (0.035+2/20+1/2+\alp,9.325/20-6/20+1) rectangle (0.065+2/20+1/2+\alp,8.675/20-6/20+1);
 	\draw [fill] (0.035+3/4+\alp,9.325/20+7/20) rectangle (0.065+3/4+\alp,8.675/20+7/20);
 	\end{tikzpicture}
 	\begin{tikzpicture}[scale=4]
 	\draw (0,1)--(0,0)--(1,0);
 	\draw[dashed] (0,1)--(1,1)--(1,0);
 	\draw[thick] (0,0.23+1/20)--(1/20+\alp,9/20+1/20)--(3/20+\alp,3/20+1/20)--(5/20+\alp,9/20+1/20)--(9/20+\alp,13/20+1/20)--(12/20+\alp,1);
 	\draw[dashed,thick] (12/20+\alp,1)--(13/20+\alp,23/20+1/20)--(14/20+\alp,1);
 	\draw[dashed,thick] (17/20+\alp,1)--(19/20+\alp,23/20+1/20)--(1,1.23+1/20);
 	\draw[thick] (14/20+\alp,1)--(16/20+\alp,13/20+1/20)--(17/20+\alp,1);
 	\draw[thick] (12/20+\alp,0)--(13/20+\alp,3/20+1/20)--(14/20+\alp,0);
 	\draw[thick] (17/20+\alp,0)--(19/20+\alp,3/20+1/20)--(1,0.23+1/20);
 	\draw[fill] (12/20+\alp,0) circle (0.015);
 	\draw[fill] (14/20+\alp,0) circle (0.015);
 	\draw[fill] (17/20+\alp,0) circle (0.015);
 	\draw[dotted] (1/20+\alp,0)--(1/20+\alp,1);
 	\draw[dotted] (3/20+\alp,0)--(3/20+\alp,1);
 	\draw[dotted] (5/20+\alp,0)--(5/20+\alp,1);
 	\draw[dotted] (6/20+\alp,0)--(6/20+\alp,1);
 	\draw[dotted] (9/20+\alp,0)--(9/20+\alp,1);
 	\draw[dotted] (12/20+\alp,0)--(12/20+\alp,1);
 	\draw[dotted] (13/20+\alp,0)--(13/20+\alp,1+3/20);
 	\draw[dotted] (14/20+\alp,0)--(14/20+\alp,1);
 	\draw[dotted] (16/20+\alp,0)--(16/20+\alp,1);
 	\draw[dotted] (17/20+\alp,0)--(17/20+\alp,1);
 	\draw[dotted] (19/20+\alp,0)--(19/20+\alp,1+3/20);
 	\draw[dotted] (1,0)--(1,1+6/20);
 	\node at (0,-0.06) {\tiny $0$};
 	\node at (1,-0.06) {\tiny $1$};
 	\node at (3/20+\alp,-0.06) {\tiny $\frac{3}{20}+\alpha$};
 	\node at (9/20+\alp,-0.06) {\tiny $\frac{9}{20}+\alpha$};
 	\node at (16/20+\alp,-0.06) {\tiny $\frac{16}{20}+\alpha$};
 	\draw[fill] (19/20+\alp,3/20+1/20) circle (0.015);
 	\draw (1,0.23+1/20) circle (0.015);
 	\draw[fill] (0,0.23+1/20) circle (0.015);
 	\draw[fill] (5/20+\alp,9/20+1/20) circle (0.015);
 	\draw[fill] (9/20+\alp,13/20+1/20) circle (0.015);
 	\draw (12/20+\alp,1) circle (0.015);
 	\draw (14/20+\alp,1) circle (0.015);
 	\draw (17/20+\alp,1) circle (0.015);
 	\draw (1,1) circle (0.015);
 	\draw[fill] (19/20+\alp,23/20+1/20) circle (0.015);
 	\draw (1,1+0.23+1/20) circle (0.015);
 	\draw [fill] (0.035+\alp,9.325/20+1/20) rectangle (0.065+\alp,8.675/20+1/20);
 	\draw [fill] (0.035+2/20+\alp,9.325/20-6/20+1/20) rectangle (0.065+2/20+\alp,8.675/20-6/20+1/20);
 	\draw [fill] (0.035+2/20+1/2+\alp,9.325/20-6/20+1/20) rectangle (0.065+2/20+1/2+\alp,8.675/20-6/20+1/20);
 	\draw [fill] (0.035+2/20+1/2+\alp,9.325/20-6/20+1/20+1) rectangle (0.065+2/20+1/2+\alp,8.675/20-6/20+1/20+1);
 	\draw [fill] (0.035+3/4+\alp,9.325/20+6/20-1/20) rectangle (0.065+3/4+\alp,8.675/20+6/20-1/20);
 	\node at (17.3/20+\alp,17.5/20) {\tiny $6$};
 	\node at (14/20+\alp,17.5/20) {\tiny $3$};
 	\node at (9.5/20+\alp,17.5/20) {\tiny $2$};
 	\node at (0/20+\alp,8.5/20) {\tiny $3$};
 	\node at (6.5/20+\alp,13/20) {\tiny $1$};
 	\node at (1.5/20+\alp,6.5/20) {\tiny $3$};
 	\node at (4.5/20+\alp,6.5/20) {\tiny $3$};
 	\node at (11.5/20+\alp,2/20) {\tiny $4$};
 	\node at (14.5/20+\alp,2/20) {\tiny $4$};
 	\node at (17/20+\alp,2/20) {\tiny $2$};
 	\node at (18.5/20+\alp,5/20) {\tiny $3$};
 	\node at (1/2,1/2){\small $G$};
 	\end{tikzpicture}
 	\caption{Let $r\in \mathbb{Q}$ and let $\alpha=\pi-r>0$ be a small irrational number. On the left picture, graph of function $\hat F$ represents a shift (i.e. rotation on the circle for the original circle map) of the representative $F$ from Figure~\ref{fig:original} for $\alpha$ to the right (and its lift, similarly as in Figure~\ref{fig:original}).  Due to the choice of $\alpha$, the lifting $\tilde F(x+\alpha)$ will already be from $\tilde\F_0(\R)$. Note that the outer homeomorphism for $\hat F$ stays the same as the one in Figure~\ref{fig:original}.}\label{fig:shifted}
 \end{figure}

\begin{lemma}\label{l:31}Let $\tilde F \in \tilde \F_0(\R)$ be lifting of $f\in C(\Ci)$, $F$ its corresponding representative and $h$ defined as in (\ref{e:2}). For the map $G=h\circ F$ the following is true.
\begin{itemize}
\item[(i)] $\forall A\subset [0,1)\text{ Borel },~\lambda(A)=\lambda(G^{-1}(A))$.
\item[(ii)] $G^{-1}(0)=F^{-1}(0)$.
\item[(iii)] The function $\hat G\colon~[0,1)\to \R$ defined by $\hat G(x)=G(x)+\tilde F(x)-F(x)$ is piecewise affine, continuous and $\lim_{x\to 1_-}\hat G(x)=\hat G(0)+\degr(f)$.
\item[(iv)] The function $\tilde G\colon~\R\to \R$ defined as the extension of $\hat G$ satisfying $$\tilde G(x+1)=\tilde G(x)+\degr(f)$$ belongs to $\tilde\F_{\lambda}(\R)$, so $\tilde G$ is a lifting of some $g\in C_{\tilde\lambda}(\Ci)$.
\item[(v)] $D(\tilde G)\subset D(\tilde F)\subset \Q_{\pi}$.
\item[(vi)] $\tilde G(D(\tilde G))\subset \tilde G(D(\tilde F))\subset\Q$; in particular $\tilde G(D(\tilde G))\cap D(\tilde G)=\emptyset$.
    \item[(vii)] The set $D=D(g)=\phi(D(\tilde G))$ of discontinuities of the derivative of $g$ satisfies $$g(D)\cap D=\emptyset.$$
        \item[(viii)] For every $x\in \Q_{\pi}$, $\tilde G(x)\in\Q$.
 \end{itemize}
 \end{lemma}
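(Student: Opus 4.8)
The plan is to reduce the lemma to a single identity. Let $\tilde h\colon\R\to\R$ be the lift of the outer homeomorphism $h$ from (\ref{e:2}), i.e.\ $\tilde h|_{[0,1]}=h$ and $\tilde h(t+1)=\tilde h(t)+1$; since $h$ is an increasing piecewise affine homeomorphism of $[0,1]$ with $h(0)=0$, $h(1)=1$, the map $\tilde h$ is an increasing piecewise affine homeomorphism of $\R$. I claim $\tilde G=\tilde h\circ\tilde F$. Indeed, for $x\in[0,1)$ the number $k(x):=\tilde F(x)-F(x)$ is an integer, so $\tilde h(\tilde F(x))=\tilde h(F(x))+k(x)=h(F(x))+k(x)=G(x)+\tilde F(x)-F(x)=\hat G(x)$ (using $F(x)\in[0,1)$), while $(\tilde h\circ\tilde F)(x+1)=\tilde h(\tilde F(x)+\degr(f))=(\tilde h\circ\tilde F)(x)+\degr(f)$; hence $\tilde h\circ\tilde F$ is exactly the extension of $\hat G$ described in (iv). This gives at once that $\hat G$ and $\tilde G$ are continuous and piecewise affine and that $\lim_{x\to 1_-}\hat G(x)=\hat G(0)+\degr(f)$, i.e.\ (iii). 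For (i), compute on intervals: $h$ being a strictly increasing homeomorphism with $h(y)=\lambda(F^{-1}((0,y)))$, for $0\le a<b\le1$ one gets $\lambda(G^{-1}((a,b)))=\lambda(F^{-1}((h^{-1}(a),h^{-1}(b))))=h(h^{-1}(b))-h(h^{-1}(a))=b-a$, and since open intervals generate the Borel $\sigma$-algebra, $G$ preserves $\lambda$ (alternatively, $G$ is piecewise affine with $h'(z)=\sum_{x\in F^{-1}(z)}|F'(x)|^{-1}$ and Lemma~\ref{l:6} applies). Property (ii) is immediate since $h$ is a homeomorphism fixing $0$: $G(x)=h(F(x))=0\iff F(x)=0$. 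Finally (iv): $\tilde G$ is continuous with $\tilde G(x+1)=\tilde G(x)+\degr(f)$, hence the lift of some $g\in C(\Ci)$, and by (i) together with Lemma~\ref{l:3}, $g\in C_{\tilde\lambda}(\Ci)$, i.e.\ $\tilde G\in\tilde\F_{\lambda}(\R)$.

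For (v), the chain rule gives that $\tilde G=\tilde h\circ\tilde F$ is differentiable at every point where $\tilde F$ is differentiable (automatically with nonzero derivative, by (\ref{e:41})(i)) and $\tilde h$ is differentiable at the image point; thus $D(\tilde G)\subset D(\tilde F)\cup\tilde F^{-1}(D(\tilde h))$. Now $h$ is piecewise affine and a break of $h$ can occur only at an image of a turning point or of a break point of $F$, so $D(\tilde h)\subset S+\Z$, where $S=\{0\}\cup F(T_{[0,1]}(\tilde F))$. As $S\subset[0,1)$ and $\tilde F(x)\equiv F(x)\pmod{1}$, $\tilde F(x)\in D(\tilde h)$ forces $F(x)\in S$, hence $x\in F^{-1}(S)=D(\tilde F)\cap[0,1]$ on $[0,1]$, and then on all of $\R$ by $\Z$-periodicity. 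Therefore $D(\tilde G)\subset D(\tilde F)$, which by (\ref{e:41})(ii) and $\Q_{\pi}+\Z=\Q_{\pi}$ is contained in $\Q_{\pi}$.

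The heart of the lemma is (viii); (vi) and (vii) then follow formally, using $\Q\cap\Q_{\pi}=\emptyset$ (as $\pi\notin\Q$), together with (v) for $\tilde G(D(\tilde G))\subset\tilde G(D(\tilde F))$ and $\phi\circ\tilde G=g\circ\phi$ for (vii). By $\Z$-periodicity of $\tilde G$ it suffices to take $x\in\Q_{\pi}\cap[0,1)$, where $\tilde G(x)=h(F(x))+k(x)$ with $k(x)\in\Z$, so one must show $h(F(x))\in\Q$. \emph{Step A: $h(s)\in\Q$ for all $s\in S\cup\{0,1\}$.} Clearly $h(0)=0$, $h(1)=1$. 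For $s\in S$, write $F^{-1}((0,s))$ as the disjoint union of its connected components; their endpoints lie in $F^{-1}(\{0,s\})\cup\{0,1\}\subset\Q_{\pi}\cup\{0,1\}$ by (\ref{e:41})(ii) (recall $0\in S$). A component has $0$ in its closure iff $F(0)\in(0,s)$ (no boundary case occurs, as $F(0)\notin S$ since $0\notin\Q_{\pi}$); and — because $\tilde F(1)\notin\Z$ (else $0\in F^{-1}(S)\subset\Q_{\pi}$, impossible), so that continuity of $f$ on $\Ci$ gives $F(1_-)=F(0)$ — a component has $1$ in its closure iff the same inequality holds. Since $F$ is onto (Remark~\ref{r:1}) these cannot be the same component, so either none of them meets $\{0,1\}$, or exactly one is $(0,d)$ and exactly one is $(c,1)$ with $c,d\in\Q_{\pi}$, of combined length $d+(1-c)=1+(d-c)\in\Q$. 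Every other component has both endpoints in $\Q_{\pi}$, hence rational length, so $h(s)=\lambda(F^{-1}((0,s)))\in\Q$. \emph{Step B.} If $x\in D(\tilde F)$ then $F(x)\in S$ and we are done by Step A. Otherwise $x$ lies in the interior of a maximal affine branch of $F$, which is an affine homeomorphism of a closed interval $[\ell,r]$ — with $\ell,r\in\Q_{\pi}$, taking $\ell$ or $r$ modulo $\Z$ if the branch straddles the seam $0$ — onto an interval $[u,v]$, and $u,v$ are consecutive in $S\cup\{0,1\}$ (its interior contains no point of $S$, else there would be an interior break point). Hence $h$ is affine on $[u,v]$ with $h(u),h(v)\in\Q$ by Step A, and as the inverse branch is affine, $\frac{F(x)-u}{v-u}=\frac{x-\ell}{r-\ell}$ or $1-\frac{x-\ell}{r-\ell}$, which is rational since $x,\ell,r\in\Q_{\pi}$; therefore $h(F(x))=h(u)+\frac{F(x)-u}{v-u}\bigl(h(v)-h(u)\bigr)\in\Q$. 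Thus $\tilde G(x)\in\Q$, proving (viii).

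The only genuinely delicate point is (viii): a priori $\tilde F$ (and $G$) may have irrational slopes, so rationality is not automatic. What makes it work are the two cancellations above — the pairing $F(1_-)=F(0)$ of the two boundary components of $F^{-1}((0,s))$ in Step A, which cancels the copies of $\pi$ in $h(s)$, and the fact in Step B that an affine branch of $F$ maps exactly onto a cell of the partition of $[0,1]$ by $S\cup\{0,1\}$, so the slope of that branch cancels when one passes to relative positions. With these isolated, (vi) and (vii) are routine and (i)--(v) are straightforward.
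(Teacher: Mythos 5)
Your proof is correct and follows essentially the same route as the paper's: in both, rationality of $\tilde G$ on $\Q_{\pi}$ comes from lengths of preimage components whose endpoints lie in $\Q_{\pi}$ together with affine interpolation with rational relative positions; you merely reverse the order, establishing (viii) via $h(S\cup\{0,1\})\subset\Q$ (Steps A--B) and then deducing (vi), where the paper first proves (vi) at turning values and then gets (viii) by interpolating $\tilde G$ between adjacent points of $D(\tilde F)$. If anything, your Step A is slightly more careful than the paper's corresponding computation, since you explicitly pair the two components of $F^{-1}((0,s))$ meeting the seam at $0$ and $1$ so that the copies of $\pi$ cancel, a boundary case the paper's formula $\sum_j(b_j-a_j)$ with $a_j,b_j\in\Q_{\pi}$ glosses over.
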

 \begin{proof}To verify (i), for $0\le u<v\le 1$ we can write with the help of (\ref{e:2})
 \begin{align*}&\lambda(G^{-1}((u,v)))=\lambda(F^{-1}(h^{-1}(u,v)))=\lambda(F^{-1}((h^{-1}(u),h^{-1}(v))))=\\
 &=\lambda(F^{-1}((0,h^{-1}(v))))-\lambda(F^{-1}((0,h^{-1}(u))))=v-u.
 \end{align*}

 (ii) This is because $h(x)=0$ if and only if $x=0$.

(iii) It follows from the fact that $G=h\circ F$ and outer map $h$ is an increasing continuous piecewise affine homeomorphism of $[0,1]$.

(iv) By the previous property (iii), $\tilde G\in\tilde \F(\R)$. Lemma \ref{l:3}(ii) and (i) furthermore imply that $\tilde G\in\tilde\F_{\lambda}(\R)$ and $g\in C_{\tilde\lambda}(\Ci)$.

(v) This is because the slopes of piecewise affine outer homeomorphism $h$ can change only at the points from $F(T(\tilde F))$.

(vi) For each interval $(u,v)$, where
\begin{itemize}
\item[(1)] either $u=0$ and $v$ is the least value $F(x)>0$ at a turning point $x$ of $\tilde F$,
\item[(2)] or $u,v$ are two consecutive values at turning points of $\tilde F$,
\item[(3)] or $u$ is the biggest value $F(x)<1$ at a turning point $x$ of $\tilde F$ and $v=1$,
\end{itemize}
$F^{-1}((u,v))$ can be expressed as a finite union
$$
F^{-1}((u,v))=\bigcup_{j}(a_j,b_j),\text{ where }F((a_j,b_j))=(u,v)\text{ for each }j.
$$
It follows from our definition of $\tilde \F_0(\R)$ in (\ref{e:41}) that $a_j,b_j\in\Q_{\pi}$ for all $j$, so
\begin{align}\label{a:4}\lambda(F^{-1}((u,v)))=\sum_j(b_j-a_j)\in\Q.
\end{align}
Fix a turning point $w\in T_{[0,1]}(\tilde F)$ for which $F(w)>0$. One can set
$$0=u_1<v_1=u_2<v_2=\cdots=v_k=F(w),
$$
where $u_i,v_i$ were described above in (1)-(3); then by (\ref{e:2}) and (\ref{a:4}),
\begin{align}\label{a:5}
G(w)=(h\circ F)(w)=&\lambda(F^{-1}(0,F(w)))=\sum_i\lambda(F^{-1}((u_i,v_i)))=\\
&=\sum_i\sum_j(b_j-a_j)\in\Q\nonumber.
\end{align}
By (v) and (\ref{e:41})(ii), $D(\tilde G)\subset D(\tilde F)\cap [0,1]= F^{-1}(S)\subset\Q_{\pi}$. Since $G(x)\in\Q$ if and only if $\tilde G(x)\in\Q$, from (\ref{a:5}) and (\ref{e:41})(ii) we obtain $\tilde G(D(\tilde G))=\tilde G(D(\tilde F))\subset\Q$ hence $$\tilde G(D(\tilde G))\cap D(\tilde G)=\emptyset.$$

(vii) This property is a consequence of (iv) and the fact that $\tilde G$ is a lifting of $g$ due to formula (\ref{e:10}).

To prove (viii), by conditions (v) and (vi) we can assume that $x\notin D(\tilde{F})$. Let $x\in (p,q)\cap \mathbb{Q}_{\pi}$, where $p,q\in D(\tilde F)$ are adjacent. Then
$$
\tilde G(x)=\tilde G(p)+\frac{\tilde G(q)-\tilde G(p)}{q-p}(x-p).
$$
By (vi), each of the numbers $\tilde G(p)$, $\tilde G(q)-\tilde G(p)$, $q-p$ and $x-p$ is rational, so $\tilde G(x)\in\Q$.

 \end{proof}

 Using Lemma \ref{l:31} we introduce the set $C_{\tilde \lambda,0}(\Ci)$ of the circle maps from $C_{\tilde \lambda}(\Ci)$ with liftings in $\tilde\F_{\lambda,0}(\R)$, where
\begin{equation}\label{e:20}\tilde\F_{\lambda,0}(\R):=\{\tilde G\colon~\tilde F\in \tilde\F_0(\R)\text{ and }G=h\circ F\}.
  \end{equation}

Recall that by our definition the set $\tilde\F(\R)$ consists of liftings of {\it onto} circle maps (see Remark \ref{r:1} and the text preceding it). 

\begin{lemma}\label{l:5}The set $C_{\tilde \lambda,0}(\Ci)$ is dense in $C_{\tilde \lambda}(\Ci)$.
\end{lemma}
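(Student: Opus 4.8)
The plan is to obtain the approximating map by running the output of Lemma~\ref{l:7} through the construction of Lemma~\ref{l:31}. So, given $g_0\in C_{\tilde\lambda}(\Ci)$ and $\eps>0$, I first fix a lifting $\tilde F_0$ of $g_0$ with representative $F_0=\tilde F_0\vert[0,1)(\mo 1)$; by Lemma~\ref{l:3}, $\lambda(F_0^{-1}(A))=\lambda(A)$ for every Borel $A\subset[0,1)$. Using Lemma~\ref{l:7} I pick $\tilde F\in\tilde\F_0(\R)$ with $\sup_{x\in[0,1]}|\tilde F(x)-\tilde F_0(x)|<\eta$, where $\eta>0$ is to be fixed at the end. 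With $F$ the representative of $\tilde F$, $h$ the outer homeomorphism~(\ref{e:2}), $G=h\circ F$, and $\tilde G$ the extension from Lemma~\ref{l:31}(iii)--(iv), Lemma~\ref{l:31}(iv) together with the definition~(\ref{e:20}) gives $\tilde G\in\tilde\F_{\lambda,0}(\R)$, so the circle map $g$ lifted by $\tilde G$ lies in $C_{\tilde\lambda,0}(\Ci)$. It then remains only to check that $\rho(g,g_0)$ can be made smaller than $\eps$.

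The hard part — and the only step I expect to require real work — is the estimate $\sup_{t\in[0,1]}|h(t)-t|\le 2\eta$, i.e.\ that the outer homeomorphism of a representative lying uniformly close to the Lebesgue measure preserving representative $F_0$ is uniformly close to the identity. I would prove it directly from $h(x)=\lambda(F^{-1}((0,x)))$ (and $h(0)=0$) by comparing preimages under $F$ and $F_0$: if $y\in F^{-1}((0,x))$ then $\tilde F(y)\in\bigcup_{k\in\Z}(k,k+x)$, hence $\tilde F_0(y)\in\bigcup_{k\in\Z}(k-\eta,k+x+\eta)$, so $F_0(y)$ lies in the set $C_x\subset[0,1)$ obtained by reducing $(-\eta,x+\eta)$ modulo $1$, and $\lambda(C_x)\le x+2\eta$; thus $F^{-1}((0,x))\subset F_0^{-1}(C_x)$ and, as $F_0$ preserves $\lambda$, $h(x)\le x+2\eta$. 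Symmetrically $F_0^{-1}((\eta,x-\eta))\subset F^{-1}((0,x))$ (left side empty if $x\le2\eta$), giving $h(x)\ge x-2\eta$. Conceptually $h$ is the distribution function of the push-forward $F_*\lambda$, and this says that uniformly perturbing the measure preserving $F_0$ a little perturbs $F_*\lambda$ a little; the only bookkeeping is the reduction modulo $1$, which causes no trouble near $0$ and $1$ since $h(0)=0$ and $h(1)=1$.

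Finally, Lemma~\ref{l:31}(iii) gives $\tilde G(x)-\tilde F(x)=G(x)-F(x)=h(F(x))-F(x)$ for $x\in[0,1)$, so $\sup_{x\in[0,1)}|\tilde G(x)-\tilde F(x)|\le\sup_{t\in[0,1]}|h(t)-t|\le 2\eta$. Since every point of $\Ci$ equals $\phi(y)$ for some $y\in[0,1)$ and $|\phi(a)-\phi(b)|=|e^{2\pi ia}-e^{2\pi ib}|\le 2\pi|a-b|$, with $f$ the circle map lifted by $\tilde F$ I get $\rho(g,f)\le 4\pi\eta$ and $\rho(f,g_0)\le 2\pi\eta$, hence $\rho(g,g_0)\le 6\pi\eta$. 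Taking $\eta<\min\{1,\eps/(6\pi)\}$ makes this $<\eps$, and since $g\in C_{\tilde\lambda,0}(\Ci)$ and $g_0,\eps$ were arbitrary, $C_{\tilde\lambda,0}(\Ci)$ is dense in $C_{\tilde\lambda}(\Ci)$.
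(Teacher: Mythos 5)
Your proof is correct and follows essentially the same route as the paper's: approximate the lifting by an element of $\tilde\F_0(\R)$ via Lemma~\ref{l:7}, pass to $G=h\circ F$ and its lifting $\tilde G$ as in Lemma~\ref{l:31}, and conclude by showing the outer homeomorphism $h$ is uniformly close to the identity. The only difference is that where the paper justifies $\rho(h,\id)$ being small by citing the analogous interval-case argument of \cite{Bo91}, you give a direct quantitative bound $\sup_{t\in[0,1]}|h(t)-t|\le 2\eta$ using that $F_0$ preserves $\lambda$, which is a correct, self-contained way to fill in exactly that step.
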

\begin{proof}  Fix $\eps>0$ and a map $e\in C_{\tilde\lambda}(\Ci)$ with a lifting $\tilde E\in\tilde\F_{\lambda}(\R)$. By Lemma \ref{l:7} and Remark \ref{r:1} there is a map $\tilde F\in\tilde\F_0(\R)$ such that its representative $F=\tilde F\vert [0,1)$ is onto and
\begin{itemize}
\item[(i)]$\rho(\tilde E,\tilde F)<\frac{\eps}2$,
\item[(ii)]for $h\colon [0,1]\to [0,1]$ defined by $h(0)=0$, $h(x)=\lambda(F^{-1}((0,x)))$ (as in \eqref{e:2})
\begin{equation*}\label{e:22}\rho(h,\id)<\frac{\eps}2.\end{equation*}
\end{itemize}
Condition (ii) can be fulfilled due to the following reasoning. If circle maps $f_n$ converge in the uniform metric to a Lebesgue measure preserving circle map then their corresponding $h_n$ defined as in (ii) converge to $\text{id}$ - we refer the reader to \cite{Bo91} where the analogous interval case had been treated in details. We have proved in Lemma \ref{l:31}(iv) that $G=h\circ F$ is a representative of a map $g$ from $C_{\tilde\lambda,0}(\Ci)$. Moreover, using (i) and (ii) and the definition of $\tilde G$ in Lemma \ref{l:31}(iii),(iv) showing that $\rho(\tilde F,\tilde G)=\rho(F,G)$, we obtain
\begin{align}\label{a:6}
&\rho(e,g)\le \rho(\tilde E,\tilde G)\le \rho(\tilde E,\tilde F)+\rho(\tilde F,\tilde G)=\nonumber \\
&=\rho(\tilde E,\tilde F)+\rho(F,G)=\rho(\tilde E,\tilde F)+\rho(F,h\circ F)<\eps.\nonumber
\end{align}
Thus, for each $\eps>0$ and $e\in C_{\tilde\lambda}(\Ci)$ we have found a map $g\in C_{\tilde\lambda,0}(\Ci)$ such that $\rho(e,g)<\eps$.
\end{proof}

\begin{definition}
We say that two maps $f,g\colon~[a,b]\subset [0,1]\to \R$ are $\lambda$-equivalent if for each Borel set $A\subset \R$,
$$\lambda(f^{-1}(A))=\lambda(g^{-1}(A)).
$$
\end{definition}

For $0\le a<b\le 1$ and $0\le c<d\le 1$ we denote by $^+h_{{[a,b];[c,d]}},^-h_{[a,b];[c,d]}$ the affine maps from $[a,b]$ onto $[c,d]$ fulfilling $^+h(a)=c$, $^+h(b)=d$, resp.\ $^-h(a)=d$, $^-h(b)=c$.

\begin{lemma}\label{l:1}For any map $f\in C_{\lambda}(I)$, the maps $$^{\pm}h_{[0,1];[c,d]}\circ f\circ ^+h_{[a,b];[0,1]}$$
are $\lambda$-equivalent to the maps $^{\pm}h_{[a,b];[c,d]}$.
\end{lemma}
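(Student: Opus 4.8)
The plan is to reduce the statement to a one-line computation with the slope moduli of the affine maps together with the measure-preservation of $f$. Write $L:={}^+h_{[a,b];[0,1]}$ for the increasing affine bijection of $[a,b]$ onto $[0,1]$; it has constant slope $1/(b-a)$, so $\lambda(L^{-1}(B))=(b-a)\lambda(B)$ for every Borel $B\subset[0,1]$. Write $R:={}^{\pm}h_{[0,1];[c,d]}$ for the outer affine map of $[0,1]$ onto $[c,d]$ (either sign), whose slope has modulus $d-c$; since the range of $R$ is $[c,d]$, we have $R^{-1}(A)=R^{-1}(A\cap[c,d])\subset[0,1]$ and $\lambda(R^{-1}(A))=\tfrac{1}{d-c}\lambda(A\cap[c,d])$ for every Borel $A\subset\R$. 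Finally write $M:={}^{\pm}h_{[a,b];[c,d]}$ (with the same sign as $R$), of slope modulus $\tfrac{d-c}{b-a}$, so $\lambda(M^{-1}(A))=\tfrac{b-a}{d-c}\lambda(A\cap[c,d])$.

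Next, for an arbitrary Borel set $A\subset\R$ I would expand
$$(R\circ f\circ L)^{-1}(A)=L^{-1}\bigl(f^{-1}(R^{-1}(A))\bigr).$$
The set $R^{-1}(A)$ is a Borel subset of $[0,1]$, and $f\in C_{\lambda}(I)$ preserves $\lambda$ on $[0,1]$, hence $\lambda\bigl(f^{-1}(R^{-1}(A))\bigr)=\lambda(R^{-1}(A))=\tfrac{1}{d-c}\lambda(A\cap[c,d])$. Applying the scaling by $L$ gives
$$\lambda\bigl((R\circ f\circ L)^{-1}(A)\bigr)=(b-a)\cdot\frac{1}{d-c}\,\lambda(A\cap[c,d])=\frac{b-a}{d-c}\,\lambda(A\cap[c,d])=\lambda\bigl(M^{-1}(A)\bigr).$$
As $A$ was arbitrary, this is precisely the assertion that $R\circ f\circ L={}^{\pm}h_{[0,1];[c,d]}\circ f\circ{}^+h_{[a,b];[0,1]}$ and $M={}^{\pm}h_{[a,b];[c,d]}$ are $\lambda$-equivalent; running the argument with $R={}^+h$ and then with $R={}^-h$ covers both cases in the statement.

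There is no genuine obstacle here; the only points that deserve a word are that $f^{-1}$ of a Borel set is Borel (so that the measure-preservation of $f$ applies), and that one must intersect with $[c,d]$ because the ranges of $R$ and $M$ both lie in $[c,d]$ — and that the slope moduli of the "$+$" and "$-$" affine maps coincide, so the sign of the outer homeomorphism plays no role in the measure count.
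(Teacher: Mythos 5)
Your proof is correct and follows essentially the same route as the paper: expand the preimage of the composition, use the $\lambda$-preservation of $f$ on the middle factor, and match the affine scaling factor $\tfrac{b-a}{d-c}$ on both sides. The only cosmetic difference is that you take arbitrary Borel $A\subset\R$ and intersect with $[c,d]$, whereas the paper simply restricts to Borel $A\subset[c,d]$ from the start.
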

\begin{proof}For each Borel $A\subset [c,d]$ we have
\begin{align}&\lambda((^{\pm}h_{[0,1];[c,d]}\circ f\circ ^+h_{[a,b];[0,1]})^{-1}(A))=\nonumber\\
&=\lambda((^+h_{[a,b];[0,1})^{-1}\circ f^{-1}\circ (^{\pm}h_{[0,1];[c,d]})^{-1}(A))=\nonumber\\
&=\frac{b-a}{d-c}\lambda(A)=\lambda((^{\pm}h_{[a,b];[c,d]})^{-1}(A)).\nonumber\end{align}
\end{proof}

We will apply Lemma \ref{l:1} for two special classes of elements from $C_{\lambda}(I)$.  The first one consists of piecewise affine maps with $2n+1$, $n\in\N$, full laps: for points $0=x_0<x_1<\cdots<x_{2n}<x_{2n+1}=1$, $\bar{x}=(x_0,x_1,\dots,x_{2n+1})$ and $i\in\{0,1,\dots,2n+1\}$ we define $\beta[2n+1,\bar{x}]\in C_{\lambda}(I)$ as
\begin{equation}\label{e:4}
\beta[2n+1,\bar{x}](x_{i}) :=
\begin{cases}
0,~i\text{ even},\\
1,~i\text{ odd},
\end{cases}
\end{equation}
and continuous, affine on each $[x_i,x_{i+1}]$.

The second class that we define consists of maps $\Psi=\Psi[\eps,a',d,e,h']$, where
\begin{equation}\label{e:5}0<\frac{\eps}{3}<a',~a'+\frac{2\eps}{3}<d<e<h'-\frac{2\eps}{3},~h'<1-\frac{\eps}{3},
\end{equation}
as illustrated by the right part of Figure \ref{fig:dendritemap1} and its caption.

\begin{figure}[!ht]
	\centering
	\begin{tikzpicture}[scale=5]
	\draw (0,0)--(0,1)--(1,1)--(1,0)--(0,0);
	\draw[dashed] (1/20,0)--(1/20,1);
	\draw[dashed] (2/20,0)--(2/20,1);
	\draw[dashed] (3/20,0)--(3/20,1);
	\draw[dashed] (5/20,0)--(5/20,1);
	\draw[dashed] (0,1/6)--(1,1/6);
	\draw[dashed] (0,5/6)--(1,5/6);
	\draw[dashed] (1-1/20,0)--(1-1/20,1);
	\draw[dashed] (1-3/20,0)--(1-3/20,1);
	\draw[dashed] (13/20,0)--(13/20,5/6);
	\node at (0.5,0.5) {$\Phi$};
	\node at (0,-0.05) {\small $0$};
	\node at (1/20,-0.05) {\small  $a$};
	\node at (2/20,-0.05) {\small  $b$};
	\node at (3/20,-0.05) {\small  $c$};
	\node at (5/20,-0.05) {\small  $d$};
	\node at (13/20,-0.05) {\small $e$};
	\node at (1-1/20,-0.05) {\small $h$};
	\node at (1-2/20,-0.05) {\small $g$};
	\node at (1-3/20,-0.05) {\small $f$};
	\node at (1,-0.05) {\small  $1$};
	\node at (-0.05,1/6) {\small  $\eps$};
	\node at (-0.1,5/6) {\small  $1-\eps$};
	\draw[thick] (0,0)--(1/20,1/6);
	\draw[thick] (1/20,5/6)--(2/20,1)--(3/20,5/6)--(5/20,1/6)--(13/20,5/6)--(17/20,1/6)--(18/20,0)--(19/20,1/6);
	\draw[thick] (1,1)--(1-1/20,1-1/6);
	\end{tikzpicture}
	\hspace{0,1cm}
	\begin{tikzpicture}[scale=5]
	\draw (0,0)--(0,1)--(1,1)--(1,0)--(0,0);
	\draw[dashed] (1/20,0)--(1/20,1/6);
	\draw[dashed] (2/20,0)--(2/20,5/6);
	\draw[dashed] (3/20,0)--(3/20,1);
	\draw[dashed] (4/20,0)--(4/20,5/6);
	\draw[dashed] (5/20,0)--(5/20,1);
	\draw[dashed] (0,1/6)--(1,1/6);
	\draw[dashed] (0,5/6)--(1,5/6);
	\draw[dashed] (13/20,0)--(13/20,5/6);
	\draw[dashed] (1-1/20,0)--(1-1/20,5/6);
	\draw[dashed] (1-2/20,0)--(1-2/20,1/6);
	\draw[dashed] (16/20,0)--(16/20,1/6);
	\node at (0.5,0.5) {$\Psi$};
	\node at (0,-0.05) {\small $0$};
	\node at (1/20,-0.06) {\small  $a$};
	\node at (2/20,-0.05) {\small  $a'$};
	\node at (3/20,-0.05) {\small  $b'$};
	\node at (4/20,-0.05) {\small  $c'$};
	\node at (5/20,-0.05) {\small  $d$};
	\node at (13/20,-0.05) {\small $e$};
	\node at (1-4/20,-0.05) {\small $f'$};
	\node at (1-1/20,-0.05) {\small $h$};
	\node at (1-2/20,-0.05) {\small $h'$};
	\node at (1-3/20,-0.05) {\small $g'$};
	\node at (1,-0.05) {\small  $1$};
	\node at (-0.05,1/6) {\small  $\eps$};
	\node at (-0.1,5/6) {\small  $1-\eps$};
	\draw[thick] (0,0)--(1/20,1/6)--(2/20,5/6)--(3/20,1)--(4/20,5/6)--(5/20,1/6)--(13/20,5/6)--(16/20,1/6)--(17/20,0)--(18/20,1/6)--(19/20,5/6)--(1,1);
	\end{tikzpicture}
	\caption{For $\eps\in (0,1/2)$, $a=b-a=c-b=g-f=h-g=1-h=b'-a'=c'-b'=g'-f'=h'-g'=\frac{\eps}{3}$. }\label{fig:dendritemap1}
\end{figure}
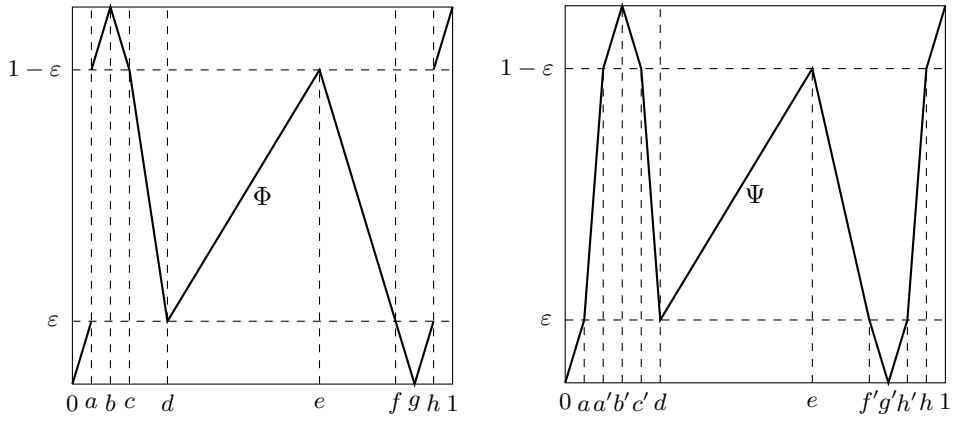

\begin{lemma}\label{l:2}For each choice of values $\eps,a',d,e,h'$ fulfilling (\ref{e:5}), $\Psi[\eps,a',d,e,h']\in C_{\lambda}(I)$.\end{lemma}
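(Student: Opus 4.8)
The plan is to verify that $\Psi := \Psi[\eps,a',d,e,h']$ is Lebesgue measure preserving by checking the pointwise criterion of Lemma~\ref{l:6}: since $\Psi$ is piecewise affine with finitely many laps and nonzero slopes, it suffices to show that for every $y$ in the complement of the (finite) image of the breakpoint set, the sum $\sum_{x\in\Psi^{-1}(y)}\vert\Psi'(x)\vert^{-1}$ equals $1$. To do this I would first read off the laps of $\Psi$ from Figure~\ref{fig:dendritemap1} together with the normalization in its caption, namely $a = b'-a' = c'-b' = g'-f' = h'-g' = \eps/3$ and the remaining breakpoints $d,e$ being free subject to~(\ref{e:5}). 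Concretely, $\Psi$ rises affinely from $(0,0)$ to $(a,\eps)$, then from $(a,\eps)$ to $(a',5\eps/6)$... — more carefully, I would list the successive graph vertices $(0,0),(a,\eps),(a',1-\eps),(b',1),(c',1-\eps),(d,\eps),(e,1-\eps),(f',\eps),(g',0),(h',\eps),(h,1-\eps),(1,1)$ exactly as drawn, where $f' = 1-4/20$ is relabeled consistently with~(\ref{e:5}), and deduce the slope of each lap from these vertices and the fixed horizontal widths $\eps/3$.

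The core computation is then a bookkeeping over three horizontal strips determined by the values $0,\eps,1-\eps,1$. For $y\in(1-\eps,1)$ there are exactly two preimages, lying on the two laps adjacent to the peak at $(b',1)$, each of absolute slope $(1/6)/(\eps/3)\cdot(\text{appropriate factor})$; I would check that their reciprocal slopes sum to $1$. For $y\in(0,\eps)$ the preimages lie on the short laps near $0,1$ and near the valley at $(g',0)$, together with the two long laps of slope $\pm(1-2\eps)/(e-d)$ and $\pm(1-2\eps)/(f'-e)$; again summing reciprocal slopes must give $1$. For $y\in(\eps,1-\eps)$ one picks up one preimage from each of the long middle laps plus contributions from the steep side laps, and once more the total is $1$. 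Because all horizontal widths except those of the two genuinely long laps are the common value $\eps/3$, and the heights are the common values $\eps$, $1-2\eps$ or $1$, these sums telescope to the same rational expression in each strip; the two long laps over $[d,e]$ and $[e,f']$ have widths summing to exactly what is needed to make up the difference from $1$, which is where hypothesis~(\ref{e:5}) (guaranteeing $d<e<h'$ and the prescribed gaps) is used.

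An equivalent and perhaps cleaner route, which I would actually prefer to present, is to build $\Psi$ from pieces to which Lemma~\ref{l:1} applies: the graph of $\Psi$ on $[0,d]$ together with a reflected copy, and on $[e,1]$, consists of affine homeomorphisms $^{\pm}h_{[a_i,a_{i+1}];[c_i,c_{i+1}]}$ between subintervals, while on the single long lap $[d,e]$ (and $[e,f']$) it is an affine homeomorphism onto $[\eps,1-\eps]$; one then observes that $\Psi$ restricted to a full preimage $\Psi^{-1}(J)$ of a small interval $J$ is, up to $\lambda$-equivalence, a disjoint union of such affine homeomorphisms whose domains partition $[0,1]$, so $\lambda(\Psi^{-1}(J)) = \lambda(J)$. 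The main obstacle is purely organizational: getting the list of breakpoints and slopes exactly right from the figure and~(\ref{e:5}), and making sure no lap is double-counted or omitted when $J$ straddles one of the critical levels $\eps$ or $1-\eps$ — once the laps are correctly enumerated, each strip computation is a one-line sum of reciprocals of slopes.
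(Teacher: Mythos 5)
Your overall plan---apply the criterion of Lemma~\ref{l:6} and check, strip by strip over the levels $0,\eps,1-\eps,1$, that the reciprocal slopes over each regular value sum to $1$---is exactly the paper's strategy (the paper merely streamlines the bookkeeping by comparing $\Psi$ with the auxiliary discontinuous map $\Phi$ from the left panel of Figure~\ref{fig:dendritemap1}). However, your concrete enumeration of the laps is wrong in two of the three strips, so the computation as you describe it would not give $1$. For $y\in(1-\eps,1)$ there are three preimages, not two: besides the two laps flanking the peak at $(b',1)$ there is the final lap from $(h,1-\eps)$ to $(1,1)$; all three have slope $\pm\eps/(\eps/3)=\pm 3$, and only with all three does the sum of reciprocals equal $1$ (your two laps give $2/3$). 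For $y\in(0,\eps)$ the two long laps over $[d,e]$ and $[e,f']$ contribute nothing, since their range is exactly $[\eps,1-\eps]$, and neither does the lap $[h,1]$ (range $[1-\eps,1]$); the three contributing laps are $[0,a]$ and the two flanking the valley at $(g',0)$, again each of slope $\pm 3$. Moreover your structural claim that all horizontal widths except the two long laps equal $\eps/3$ is false: $a'-a$, $d-c'$ and $h-h'$ are variable, constrained only by~(\ref{e:5}). The middle strip therefore does not ``telescope'' as you assert; what one must check is $(a'-a)+(d-c')+(e-d)+(f'-e)+(h-h')=1-2\eps$, which follows from the caption's relations $a=\eps/3$, $c'-a'=2\eps/3$, $h'-f'=2\eps/3$, $1-h=\eps/3$ --- this is precisely the reduction $a'-a+d-c'=d-c$, $f'-e+h-h'=f-e$ by which the paper transfers the computation to $\Phi$.

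Your ``cleaner'' preferred route via Lemma~\ref{l:1} is not a proof. Lemma~\ref{l:1} states that rescaled copies $^{\pm}h_{[0,1];[c,d]}\circ f\circ{}^{+}h_{[a,b];[0,1]}$ of a map $f\in C_{\lambda}(I)$ are $\lambda$-equivalent to affine surjections of $[a,b]$ onto $[c,d]$; it gives no mechanism for concluding that a concatenation of affine laps is itself measure preserving. The laps meeting $\Psi^{-1}(J)$ for a small interval $J$ certainly do not have domains partitioning $[0,1]$, and the statement $\lambda(\Psi^{-1}(J))=\lambda(J)$ is exactly the sum-of-reciprocal-slopes identity you are trying to establish, so this argument is circular. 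The correct proof is your first route with the lap lists corrected as above (or, as in the paper, the reduction to the map $\Phi$), after which each strip is indeed a one-line computation.
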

\begin{proof}
Consider the discontinuous map $\Phi$ defined by  the left part of Figure \ref{fig:dendritemap1}. The reader can easily verify that for each $y\in (0,1)\setminus\{\eps,1-\eps\}$,
\begin{equation}\label{e:1}
\sum_{x\in \Phi^{-1}(y)}\frac{1}{\vert \Phi'(x)\vert}=1.
\end{equation}
Now pick the points $a,b,b',c,c',f,f',g,g',h\in [0,1]$ as suggested in Figure~\ref{fig:dendritemap1}.
The map $\Psi$ from the right part of Figure~\ref{fig:dendritemap1} is continuous. Using its description in the caption of Figure~\ref{fig:dendritemap1} let us show that $\Psi\in C_{\lambda}(I)$. It is clear that for any $y\in (0,\eps)\cup (1-\eps,1)$ the equality (\ref{e:1}) estimated for $\Psi$ holds true again.
For any $y\in (\eps,1-\eps)$ we can write
\begin{align}\label{a:1}
&\sum_{x\in \Psi^{-1}(y)}\frac{1}{\vert \Psi'(x)\vert}=\frac{a'-a}{1-2\eps}+\frac{d-c'}{1-2\eps}+\frac{e-d}{1-2\eps}+\\
&+\frac{f'-e}{1-2\eps}+\frac{h-h'}{1-2\eps}=\clubsuit\nonumber,
\end{align}
since $a'-a+d-c'=d-c$ and $f'-e+h-h'=f-e$, we can rewrite (\ref{a:1}) with the help of (\ref{e:1}) as
\begin{align*}
&\clubsuit=\frac{d-c}{1-2\eps}+\frac{e-d}{1-2\eps}+\frac{f-e}{1-2\eps}=1,
\end{align*}
i.e., $\Psi\in C_{\lambda}(I)$ by Lemma \ref{l:6}.
\end{proof}

\subsection{Partitions, special perturbations}\label{sec:PartitionsSpecialPerturbations}

In this section we will start with maps from $C_{\tilde \lambda,0}(\Ci)$ defined in  the previous section and particular associated partitions of $\Ci$ and show how to perturb such maps and refine their associated partitions so that they will satisfy conditions (C1)-(C6) given below. This will provide us with the crucial step in proving genericity of s-limit shadowing in the next section. For what follows we refer the reader to see Figure~\ref{fig:dendritemap2} to visualize the discussed concepts better.

Given a piecewise affine circle map $g\in C_{\tilde \lambda,0}(\Ci)$, $\eps>0$ and its \emph{affine
partition} $\PP\supset D(g)$ for which  $$\PP\subset \phi(\Q_{\pi})\text{ and }\vert\vert\PP\vert\vert<\eps,$$ (where $\vert\vert \cdot \vert\vert$ denotes the maximum diameter of partition elements) we will construct a
perturbation $\theta$ of $g$ and a partition $\PQ\subset \phi(\Q_{\pi})$ for $\theta$ for which $\PP\prec\PQ$ (i.e. $\PQ$ \emph{refines} $\PP$)
and such that each $J\in \PQ$ has a subdivision into subarcs $L^J_1, L^J_2,M^J,R^J_2,R^J_1$ whose order preserves order in $J$ satisfies

\begin{enumerate}[(C1)]
	\item\label{C1} There is $I\in \PQ$ (depending on $J$) such that $\theta(J)\supset I$,
	\item\label{C2} Let $I\in \PQ$ be such that $\theta(J)\cap (L^I_1\cup L^I_2)\neq \emptyset$. Then
	\begin{enumerate}
		\item\label{C21} $L^I_1\cup L^I_2\subset  \theta(J)$,
		\item\label{C22} if $K\in \PQ$ is the unique element such that $R_1^K\cap L_1^I\neq \emptyset$
		then $R^K_1\cup R^K_2\subset  \theta(J)$.
	\end{enumerate}
	\item\label{C3} Let $I\in \PQ$ be such that $\theta(J)\cap (R^I_1\cup R^I_2)\neq \emptyset$. Then
	\begin{enumerate}
		\item\label{C31} $R^I_1\cup R^I_2\subset  \theta(J)$,
		\item\label{C32} if $K\in \PQ$ is the unique element such that $L_1^K\cap R_1^I\neq \emptyset$
		then $L^K_1\cup L^K_2\subset  \theta(J)$.
	\end{enumerate}
	\item\label{C4} $\theta(J)=\theta(L_1^J)=\theta(R_1^J)$,
	\item\label{C5} $B_{4\eta}(\theta(M^J\cup L_2^J\cup R_2^J))\subset \theta(J)$ for sufficiently small $\eta>0$.
\end{enumerate}

By (\ref{e:20}), the map $g$ has its lifting $\tilde G$ from $\F_{\tilde\lambda,0}(\R)$ represented by $G=h\circ F\in\F([0,1))$, where $F$ and $h$ were described immediately prior to Lemma \ref{l:31}. By (iii),(iv) of Lemma \ref{l:31}, $g$ is piecewise affine, i.e., such that the map $\tilde G$, resp.\ $G$ is piecewise affine. Applying (\ref{e:2}) and Lemma \ref{l:31} we can consider a finite set $P$ of points such that
$$D(\tilde F)\cap [0,1]\subset P:=\{0<p_1<p_2<\cdots<p_m<1\}\subset \Q_{\pi}$$ for which $\tilde G\vert [p_i,p_{i+1}]$ is affine for each $i$ (set $p_{m+1}=p_1+1$), and for \begin{equation*}\label{e:21}\PP:=\phi(P)=\{\phi(p_1),\dots,\phi(p_m)\}
\subset\phi(\Q_{\pi})\subset \Ci,\end{equation*}
\begin{itemize}
\item[($a$)] $g(\PP)\subset \phi(\Q)$ hence $g(\PP)\cap \PP=\emptyset$,
\item[($b$)] $\vert\vert\PP\vert\vert:=\max_{1\le i\le m}\vert \phi(p_{i+1})-\phi(p_i)\vert<\eps$.
\end{itemize}

We will call the set $P$, resp.\ $\PP$ a partition for $\tilde G$, resp.\ $g$. Redefining $\tilde G$ on each $[p_i,p_{i+1}]$ by  (the numbers $n(i)\in\N$ and vector $\bar{x}(i)$ will be specified later)

\begin{equation}\label{e:7}\tilde \Sigma_i:=^{s(i)}h_{[0,1];[\tilde G(p_i),\tilde G(p_{i+1})]}\circ \beta[2n(i)+1,\bar{x}(i)]\circ ^{+}h_{[p_i,p_{i+1}];[0,1]},\end{equation}
where $\beta$'s were introduced in (\ref{e:4}) and $s(i)\in\{+,-\}$ ares chosen to satisfy $\tilde \Sigma_i(p_i)=\tilde G(p_i)$, yields a map $\tilde \Sigma\colon~[0,1]\to\R$ given by $\tilde \Sigma(x)=\tilde \Sigma_i(x)$, $x\in [\tilde{p}_i,\tilde{p}_{i+1}]$. Notice that still $\tilde\Sigma(1)-\tilde \Sigma(0)=\degr(g)$, so abusing the notation we will again denote by $\tilde \Sigma$ its extension from $[0,1]$ to the whole real line keeping the rule $\tilde \Sigma(x+1)=\tilde \Sigma(x)+\degr(g)$. In fact the map $\tilde \Sigma$ is a lifting of some map $\sigma\colon~\Ci\to\Ci$. Because by Lemma \ref{l:1} each map $\tilde G\vert [p_i,p_{i+1}]$ has been replaced by a $\lambda$-equivalent map $\tilde \Sigma_i$, it follows that the map $\Sigma\in\F([0,1))$ representing $\sigma$ satisfies the conditions of Lemma~\ref{l:6} hence by Lemma~\ref{l:3} (i) it holds that $\sigma\in C_{\tilde\lambda}(\Ci)$. For the map $\tilde \Sigma\vert [0,1]$ we will consider a new partition
\begin{equation*}Q:=\bigcup_{i=1}^m\bigcup_{j=0}^{2n(i)+1} h^{-1}_{[p_i,p_{i+1}];[0,1]}(x_j(i))=:
\{0=q_1<q_2<\cdots<q_{m'}=1\},
\end{equation*}
for some $m'\in \N$, where the vectors $\bar{x}(i)=(x_0(i),x_1(i),\dots,x_{2n(i)+1}(i))$ will be chosen to satisfy $Q\subset\Q_{\pi}$. Thus, the set $Q$ contains $P$ and also all new turning points of $\tilde \Sigma$ in $(0,1)$ being in $Q\setminus P$.  From our specific choice of $\beta$'s in (\ref{e:7}) and Lemma \ref{l:31}(viii) we obtain
$$Q\subset\Q_{\pi}\text{ and }\tilde \Sigma(Q)\subset \Q;$$ denoting $\PQ=\phi(Q)$ we analogously obtain for $\sigma$ and $\PQ$
$$
\PQ\subset\phi(\Q_{\pi})\text{ and }\sigma(\PQ)\subset \phi(\Q);$$
which implies that
$$\sigma(\PQ)\cap \PQ=\emptyset.
$$

At the same time the numbers $n(i)$ (recall that the number of full laps of $\beta$ is $2n(i)+1$) can be taken sufficiently large to satisfy for each $i$ and arcs $[\phi(q_i),\phi(q_{i+1})]=\phi([q_i,q_{i+1}])$,
 \begin{equation*}\label{e:9}\#\{j\colon~\sigma([\phi(q_i),\phi(q_{i+1})])
 \cap [\phi(q_j),\phi(q_{j+1})]\neq\emptyset\}\ge 3.
 \end{equation*}

Up to now, using rescaled versions of $\beta$'s  we have perturbated the map $\tilde G$ (resp.\ $g$) on the intervals $[p_i,p_{i+1}]$ (resp.\ arcs $\phi([p_i,p_{i+1}])$) to obtain the lifting $\tilde \Sigma$ of $\sigma\in C_{\tilde\lambda}(\Ci)$.

In the last part of this proof we will proceed similarly: using rescaled versions of $\Psi$'s  from Figure~\ref{fig:dendritemap1} we will perturb the map $\tilde \Sigma$ (resp.\ $\sigma$) on the intervals $[q_i,q_{i+1}]$ (resp.\ arcs $\phi([q_i,q_{i+1}])$) to obtain the lifting $\tilde \Te$ of $\ta\in C_{\tilde\lambda}(\Ci)$.

Therefore, for each $i$ define
\begin{equation}\label{e:6}\tilde \Theta_i:=^{s(i)}h_{[0,1];[\tilde \Sigma(q_i),\tilde\Sigma (q_{i+1})]}\circ \Psi[\eps_i,a'_i,d_i,e_i,h'_i]\circ ^{+}h_{[q_i,q_{i+1}];[0,1]},
\end{equation}
where $s(i)\in \{+,-\}$ is chosen to satisfy
\begin{equation}\label{e:8}\tilde \Theta_i(q_i)=\tilde \Sigma(q_i);
\end{equation}
using $\tilde \Theta_i$, we can define the map $\tilde \Theta\colon~[0,1]\to \R$ by 
$\tilde \Theta(x)=\tilde \Theta_i(x)$, $x\in [q_i,q_{i+1}]$. The reason why the degree preserving extension of $\tilde \Theta$ to the real 
line is a lifting of a map $\theta\in C_{\tilde\lambda}(\Ci)$ is analogous as above: the map $\theta$ is represented by the map 
$\Theta=\tilde \Theta\vert [0,1)(\mo~ 1)\in\F([0,1))$ that fulfills conditions of Lemma~\ref{l:6}. Let us consider the map $\tilde \Theta$, resp.\ $\theta$ with 
respect to partition $Q$, resp.\ $\PQ$. For what follows we refer the reader to the right picture in Figure~\ref{fig:dendritemap1}. Taking in (\ref{e:6}) $\eps_i$, $a'_i$, $d_i$, $e_i$ and $h'_i$ such that 
\begin{equation}\label{e:23}h^{-1}_{[q_i,q_{i+1}];[0,1]}(\{a_i,a'_i,b'_i,c'_i,d_i,e_i,f'_i,g'_i,h'_i,h_i\})\subset\Q_{\pi},\end{equation} $\eps_i$ and $d_i$ sufficiently close to $0$ and $e_i$ sufficiently close to $1$, with the help of (\ref{e:8}) we can ensure that for each $i$,
\begin{align*}\label{a:3}&\tilde \Sigma(Q)\cap [q_i,q_{i+1}]=\tilde \Theta(Q)\cap [q_i,q_{i+1}]=\\
&=\tilde\Theta(Q)\cap (h^{-1}_{[q_i,q_{i+1}];[0,1]}(d_i),
h^{-1}_{[q_i,q_{i+1}];[0,1]}(e_i)).
\end{align*}

Let us put for each $i$, $[q_i,q_{i+1}]$ and $H:=h_{[q_i,q_{i+1}];[0,1]}$, \begin{align*}&L^i_1=H^{-1}([0,c'_i]),~L^i_2=H^{-1}([c'_i,d'_i]),~ M^i=H^{-1}
([d'_i,e'_i]),\\
&R^i_2=H^{-1}([e'_i,f'_i]),~R^i_1=H^{-1}([f'_i,1]);
\end{align*}
then using $\phi$ we can transfer these sets to the arc $$J=\phi([q_i,q_{i+1}])$$
by
\begin{align*}&L^J_1=\phi(L^i_1),~L^J_2=\phi(L^i_2),~ M^J=
\phi(M^i),\\
&R^J_2=\phi(R^i_2),~R^J_1=\phi(R^i_1).
\end{align*}
The sketch of this construction is drawn in  Figure~\ref{fig:dendritemap2}.

Since $\PQ\subset\phi(\Q_{\pi})$ is a refinement of $\PP$, from $(b)$ we obtain that
$\vert\vert\PQ\vert\vert<\eps$. By Lemma \ref{l:31} $\theta(\PQ)\cap\PQ=\emptyset$ and
the conditions (C1)-(C5) for the map $\theta\in C_{\tilde\lambda}(\Ci)$ with respect to $$\PQ=\PQ_{\eps,\theta}:=\phi(Q)$$ easily follow.

\begin{figure}
	\begin{tikzpicture}[scale=5]
	\draw (0,0)--(0,1)--(0.8,1)--(0.8,0)--(0,0);
	\node at (0.3,0.5) {\bf $\theta$};
	\node at (0.1,-0.05) {\small\bf $L^J_1$};
	\node at (0.2,-0.05) {\small\bf $L^J_2$};
	\node at (0.4,-0.05) {\small\bf $M^J$};
	\node at (0.6,-0.05) {\small\bf $R^J_2$};
	\node at (0.73,-0.05) {\small\bf $R^J_1$};
	\draw[dashed](0,1/6)--(0.8,1/6);
	\draw[dashed](0,5/6)--(0.8,5/6);
	\draw[dashed](0,1/6)--(0.8,1/6);
	\draw[dashed](4/20*0.8,0)--(4/20*0.8,5/6);
	\draw[dashed](5/20*0.8,0)--(5/20*0.8,1/6);
	\draw[dashed](13/20*0.8,0)--(13/20*0.8,5/6);
	\draw[dashed](16/20*0.8,0)--(16/20*0.8,1/6);
	\draw[thick](0,0)--(1/20*0.8,1/6)--(2/20*0.8,5/6)--(3/20*0.8,1)--(4/20*0.8,5/6)--(5/20*0.8,1/6)--(13/20*0.8,5/6)--(16/20*0.8,1/6)--(17/20*0.8,0)--(18/20*0.8,1/6)--(19/20*0.8,5/6)--(0.8,1);
	\end{tikzpicture}
	\caption{$J=\phi([q_i,q_{i+1}])=L^J_1\cup L^J_2\cup M^J\cup R^J_2\cup R^J_1$. }\label{fig:dendritemap2}
\end{figure}
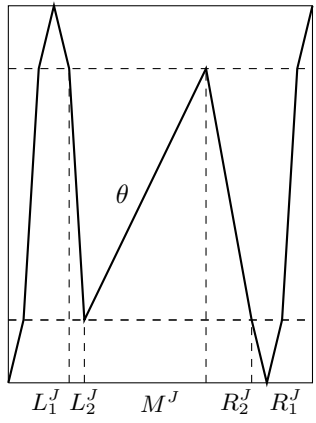

\subsection{S-limit shadowing is generic in $C_{\tilde\lambda}(\Ci)$}\label{sec:MainProof}

For a given $\eps>0$, assume that $\theta$, $\PQ_{\eps,\theta}$ with $\vert\vert\PQ_{\eps,\theta}\vert\vert<\eps$ and $\eta$ are provided in such a way that they satisfy conditions (C1)-(C5) and there is also $\delta=\delta(\theta)>0$ such that:

\begin{enumerate}[(C1)]
   \setcounter{enumi}{5}
	\item\label{C6} $\delta<\eta$ and $2\delta<\diam K$ for any $K\in \{L^J_1,L^J_2,M^J, R_1^J, R^J_2\}$ and any $J\in \PQ_{\eps,\theta}$.
\end{enumerate}

\begin{lem}\label{l:4}Let $\eps>0$, $\theta$, $\PQ_{\eps,\theta}$ and $\delta=\delta(\theta)>0$ be as above. For every $\tau\in C_{\tilde\lambda}(\Ci)$ such that $\rho(\tau,\theta)<\delta$ every $\delta$-pseudo orbit for $\tau$ is $\eps$-traced.
\end{lem}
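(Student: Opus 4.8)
The plan is to show that for every $\delta$-pseudo orbit $\mathbf y=\{y_n\}_{n\in\N_0}$ of $\tau$ there is a point $x\in\Ci$ whose $\tau$-orbit $\eps$-traces $\mathbf y$, by building a nested family of closed arcs and extracting $x$ from it by a compactness argument. First I would reduce to $\theta$: since $\rho(\tau,\theta)<\delta$, for every $k$ one has $d(\theta(y_k),y_{k+1})\le\rho(\tau,\theta)+d(\tau(y_k),y_{k+1})<2\delta$, so $\mathbf y$ is a $2\delta$-pseudo orbit of $\theta$. Thus all the data attached to $\theta$ and $\PQ_{\eps,\theta}$ are at my disposal — the subdivision of every $J\in\PQ_{\eps,\theta}$ into $L^J_1,L^J_2,M^J,R^J_2,R^J_1$, the conditions (C\ref{C1})--(C\ref{C5}), the mesh bound $\vert\vert\PQ_{\eps,\theta}\vert\vert<\eps$, and (C\ref{C6}), i.e.\ $\delta<\eta$ and $2\delta<\diam K$ for each sub-arc $K$ — while the error of $\mathbf y$ relative to $\theta$ is only $2\delta$.

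The second step is a transfer principle from $\theta$ to $\tau$: if $K\subset\Ci$ is a closed arc and $A$ a closed arc with $\theta(K)\supseteq A$, then, passing to lifts to remove any wrap-around (all images in play have diameter bounded away from $1/2$) and using the intermediate value theorem together with $\rho(\tau,\theta)<\delta$, the connected set $\tau(K)$ contains the closed arc obtained from $A$ by deleting the open $\delta$-collar at each endpoint. Applying this to (C\ref{C4}) and to (C\ref{C1})--(C\ref{C3}) — where (C\ref{C1}) forces $\theta(J)$ to contain the element $I=I(J)$ of (C\ref{C1}), and then (C\ref{C2})--(C\ref{C3}) add, on each side of $I$, two further sub-arcs, hence a collar of width exceeding $2\delta$ by (C\ref{C6}) — and to (C\ref{C5}), where the available collar has width $4\eta>4\delta$ so losing $\delta$ is harmless, I obtain the $\delta$-eroded but still genuinely thick analogues of (C\ref{C1})--(C\ref{C5}) for $\tau$: for every $J\in\PQ_{\eps,\theta}$,
\[
\tau(\overline{J})\supseteq B_\delta(I(J)),\quad \tau(\overline{L^J_1})\supseteq B_\delta(I(J)),\quad \tau(\overline{R^J_1})\supseteq B_\delta(I(J)),\quad B_{2\delta}\bigl(\tau(M^J\cup L^J_2\cup R^J_2)\bigr)\subseteq\tau(\overline{J}).
\]

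The third step is the construction of the tracing chain: I would produce closed arcs $W_0,W_1,W_2,\dots$ with $y_n\in W_n$, with each $W_n$ contained in a single element of $\PQ_{\eps,\theta}$ (so that $\diam W_n\le\vert\vert\PQ_{\eps,\theta}\vert\vert<\eps$), and with $\tau(W_n)\supseteq W_{n+1}$ for all $n$, following the same combinatorial scheme that (C\ref{C1})--(C\ref{C5}) are designed to support, each use of a $\theta$-image being replaced by its $\delta$-eroded $\tau$-counterpart from the previous step. For each $n$, let $J_n\in\PQ_{\eps,\theta}$ contain $y_n$ and let $S_n$ be the sub-arc of $J_n$ among $L^{J_n}_1,L^{J_n}_2,M^{J_n},R^{J_n}_2,R^{J_n}_1$ whose closure contains $y_n$; the bound $d(\theta(y_n),y_{n+1})<2\delta$ then determines the sub-arc containing $y_{n+1}$, up to the harmless ambiguity of being within $2\delta$ of a sub-arc endpoint. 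When the transition reaches the extreme of an image (roughly: $y_n$ lies in a flap $L^{J_n}_1$ or $R^{J_n}_1$, or $y_{n+1}$ is near an endpoint of $\theta(\overline{J_n})$) the flap condition (C\ref{C4}) absorbs it, since a flap already maps onto the whole image, so a suitable sub-flap can be taken as $W_n$ and $\tau(W_n)$ covers a $\delta$-neighbourhood of the targeted element; when the transition stays in the interior, (C\ref{C5}) keeps $\tau(\overline{M^{J_n}\cup L^{J_n}_2\cup R^{J_n}_2})$ a distance greater than $2\delta$ inside $\tau(\overline{J_n})$. In either case the $2\delta$-slack from the reduction and the inequalities of (C\ref{C6}) allow me to choose an $\eps$-small arc $W_{n+1}\ni y_{n+1}$ with $W_{n+1}\subseteq\tau(W_n)$.

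Finally, the sets $E_N:=\{x\in W_0:\tau^n(x)\in W_n\text{ for }0\le n\le N\}$ are nonempty — by downward induction along $\tau(W_n)\supseteq W_{n+1}$ — and closed, so $\bigcap_{N\ge 0}E_N\ne\emptyset$; any $x$ in this intersection satisfies $\tau^n(x)\in W_n$ for all $n$, and since $y_n\in W_n$ with $\diam W_n<\eps$ we conclude $d(\tau^n(x),y_n)<\eps$ for all $n$, i.e.\ $x$ $\eps$-traces $\mathbf y$. I expect the main obstacle to be the third step: the case analysis separating flap, buffer and core transitions, and the verification that the $\delta$-erosion from the second step never destroys an inclusion on which the routing relies — this is precisely what the $4\eta$-collar in (C\ref{C5}) and the requirements $\delta<\eta$ and $2\delta<\diam K$ in (C\ref{C6}) are there to guarantee. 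The first, second and fourth steps are routine.
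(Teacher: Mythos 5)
Your overall architecture (pass to $\theta$, erode images by $\delta$, build a chain of arcs each covering the next, finish by compactness) is close in spirit to the paper's, but your third step contains a genuine gap, and it is exactly the point that the paper's flap mechanism is designed to avoid. You insist that the chain arcs contain the pseudo-orbit points: $y_n\in W_n$ and $W_{n+1}\subseteq\tau(W_n)$. This invariant is not self-propagating: your induction gives no lower bound on how deeply $y_{n+1}$ sits inside $\tau(W_n)$. If at some step $\tau(W_{n-1})$ only barely covers $y_n$ (say $y_n$ is an endpoint of the arc $\tau(W_{n-1})$), then $W_n$ is forced to be a possibly tiny one-sided arc at $y_n$; if moreover $y_n$ lies in the core $L_2^{J_n}\cup M^{J_n}\cup R_2^{J_n}$, where $\theta$ is just an affine piece of bounded slope, then $\tau(W_n)$ is a tiny arc around $\tau(y_n)$, and the next pseudo-orbit point $y_{n+1}$, which may be a full $\delta$ away from $\tau(y_n)$, need not belong to $\tau(W_n)$ at all. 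Neither of your two escape routes closes this: (C\ref{C4}) helps only if $W_n$ can be taken to be a whole flap, but your constraints $y_n\in W_n\subseteq\tau(W_{n-1})$ neither place a flap inside $\tau(W_{n-1})$ nor make such a flap contain $y_n$; and (C\ref{C5}) controls where the image of the \emph{whole} core sits inside $\theta(J_n)$, not the size of the image of a small subarc of the core.

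The paper's proof drops precisely the requirement that the covering arcs contain the pseudo-orbit points. It tracks the pseudo-orbit only at the level of partition elements, $x_s\in J_s\in\PQ_{\eps,\theta}$, while the covering chain is carried by full flaps $Q_s\in\{L_1^{J_s},R_1^{J_s}\}$, which by (C\ref{C4}) satisfy $\theta(Q_s)=\theta(J_s)$; by (C\ref{C1})--(C\ref{C3}) this image contains a whole flap together with its buffer of the next element, each of diameter greater than $2\delta$ by (C\ref{C6}), so after the $\delta$-perturbation $\tau(Q_s)$ still contains the entire next flap $Q_{s+1}$, uniformly in $s$. Any $z\in\bigcap_{s\ge 0}\tau^{-s}(Q_s)$ then $\eps$-traces because $\tau^s(z)\in Q_s\subset J_s\ni x_s$ and $\vert\vert\PQ_{\eps,\theta}\vert\vert<\eps$; the chain arcs never need to contain the points $x_s$ themselves. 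To salvage your formulation you would have to strengthen the inductive hypothesis to guarantee a uniform margin (for instance, that $\tau(W_n)$ always contains an entire flap of the element containing $y_{n+1}$), at which point you are essentially reproducing the paper's argument with an extra, unnecessary constraint. Your first, second and fourth steps are fine.
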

\begin{proof}
Let $\mathbf{x}=\set{x_s}_{s=0}^\infty$ be a $\delta$-pseudo orbit for $\tau$. We claim that there is a sequence of arcs $J_s\in \PQ=\PQ_{\eps,\theta}$ and sets $Q_s\subset J_s$ such that
\begin{enumerate}
	\item\label{1} $x_s\in J_s$,
	\item\label{2} $\tau(Q_s)\supset Q_{s+1}$ and $Q_s\in \{L_1^{J_s},R_1^{J_s}\}$.
\end{enumerate}
As $J_0\in \PQ$ select any arc such that $x_0\in J_0$ (in the worst case there are two such arcs). Fix any $Q_0\in \{L_1^{J_0},R_1^{J_0}\}$.

Now suppose that the above conditions are satisfied for some $s$
and let $J_{s+1}\in \PQ$ be such that $x_{s+1}\in J_{s+1}$. If $\theta(x_s)\in J_{s+1}$ then since $\theta(J_s)$ contains at least one element of $\PQ$, by condition (C\ref{C1}) we have that $\theta(J_s)\cap L_1^{J_{s+1}}\neq \emptyset$ or $\theta(J_s)\cap R_1^{J_{s+1}}\neq \emptyset$. In the first case $L^{J_{s+1}}_1\cup L^{J_{s+1}}_2\subset  \theta(J_s)$ and
$R^{J_{s+1}}_1\cup R^{J_{s+1}}_2\subset  \theta(J_s)$ in the second case. Also $\theta(J_s)=\theta(Q_s)$. But then, by the definition of $\delta$ and the conditions (C\ref{C2}), (C\ref{C3}), (C\ref{C5}) we have either $L^{J_{s+1}}_1\subset \tau(Q_s)$ or
$R^{J_{s+1}}_1\subset \tau(Q_s)$. Put $Q_{s+1}=L^{J_{s+1}}_1$ and $Q_{s+1}=R^{J_{s+1}}_1$ in respective cases and observe that the claim holds.
But then, since $\vert\vert\PQ\vert\vert<\eps$, it is enough to choose $z\in \cap \tau^{-s}(Q_s)$ to obtain a point $\eps$-tracing $\mathbf{x}$.
\end{proof}

In the previous section we have described a special type of perturbation of a piecewise affine map $g\in C_{\tilde\lambda,0}(\Ci)\subset C_{\tilde\lambda}(\Ci)$ resulting with a circle map $\theta$. The main property of $\theta$ was stated in Lemma \ref{l:4}. Now we are going to apply similar approach to a dense sequence of piecewise affine maps from $C_{\tilde\lambda,0}(\Ci)$ which is possible by invoking Lemma \ref{l:5}.

To that end, let $\Gamma:=\{g_m\}_{m\ge 1}\subset C_{\tilde \lambda,0}(\Ci)$ be a dense sequence of maps in $C_{\tilde\lambda}(\Ci)$ such that
\begin{itemize}
\item each $g_m$ has an affine partition $\PP_m\subset\phi(\Q_{\pi})$ satisfying $\vert\vert\PP_m\vert\vert<\frac{1}{m}$,
    \item for each $n\ge m$, $g_n(\PP_m)\cap \PP_m=\emptyset$.
    \end{itemize}
    Notice that the second property is guaranteed by Lemma \ref{l:31}(viii). Following the previous section we perturb $g_m$ to $\theta_m$ with $\eps=\frac{1}{m}$, corresponding partition $\PQ_m:=\PQ_{\frac{1}{m},\theta_m}$, $\eta_m$, $\delta_m=\delta(\theta_m)<\frac{1}{m}$ and $U_m$ an open neighborhood around $\theta_m$ in $C_{\tilde\lambda}(\Ci)$ such that
    \begin{itemize}
\item $U_m\subset B_{\delta_{m}}(\theta_m)$,
    \item the boundary of $U_m$ does not intersect $\Gamma$.
    \end{itemize}
    We will proceed as follows to  construct sequences $\{\PQ_m\}_{m=1}^{\infty}$, where each $\PQ_m$ is a subset of $\phi(\Q_{\pi})$, and  $\{U_m\}_{m=1}^{\infty}$:  we repeatedly use Lemma \ref{l:31}(viii) and Lemma \ref{l:5}
    \begin{enumerate}
    \item we perturb $g_1$ to $\theta_1$ to obtain $g_n(\PQ_1)\cap \PQ_1=\emptyset$ for each $n\ge 1$,
        \item for $m>1$, having already constructed the sets $\PQ_i$ and $U_i$, $i=1,\dots,m-1$, in order to construct $\PQ_m$ and $U_m$ we distinguish two possibilities:
        \begin{enumerate}
        \item either $g_m\notin \bigcup_{i=1}^{m-1}U_i$ and then we construct $\PQ_m$ and $U_m$ to fulfill $$g_n(\PQ_m)\cap \PQ_m=\emptyset\text{ for each }n\ge m,~\overline{U}_m\cap \bigcup_{i=1}^{m-1}\overline{U}_i=\emptyset,$$
            \item or $g_m\in U_i$, where $i\le m-1$ is the largest number with this property; denoting $E(\PQ_i)$ the set of $\phi$-images of points defined in (\ref{e:23}) for all $J\in\PQ_i$,
            a new partition $\PQ_m\subset\phi(Q_{\pi})$ will fulfill $$E(\PQ_i)\cup\PQ_i\cup\PP_m\prec \PQ_m\text{ and }\overline{U}_m\subset U_i\setminus\bigcup_{i<j\le m-1}\overline{U}_j.$$
            In particular, $\PQ_m$ is an affine partition for $\theta_m$ which is a refinement of $\PQ_i$ and $g_n(\PQ_m)\cap \PQ_m=\emptyset\text{ for each }n\ge m$.
                \end{enumerate}
 In addition we require that the boundary of ${U}_m$ does not intersect $\Gamma$; this is possible since $\Gamma$ is countable.
\end{enumerate}
Let us put $A_n=\bigcup_{m\ge n}U_m$. Clearly, each $A_n$ is open and dense so the intersection $$A=\bigcap_{n\ge 1}A_n=\bigcap_{n\ge 1}\bigcup_{m\ge n}U_m$$ is a dense $G_{\delta}$ set in $C_{\tilde\lambda}(\Ci)$.

\begin{proof}[Proof of Theorem~\ref{thm:main}]
We will prove that each $\tau\in A$ has the s-limit shadowing property.
By our definition of $A$, there is an increasing sequence $\{m(k)\}_{k=1}^{\infty}$ such that
\begin{equation*}U_{m(1)}\supset U_{m(2)}\supset\cdots,~\{\tau\}=\bigcap_{k=1}^{\infty}U_{m(k)}.
\end{equation*}
Let $\mathbf{x}=\set{x_s}_{s=0}^\infty$ be an asymptotic pseudo orbit. By Lemma~\ref{l:4} the partition $\PQ_{m(i)}$ and $\delta=\delta_{m(i)}>0$ were chosen for $\alpha$-shadowing with $\alpha=1/m(i)$. Fix $k$ so that \begin{equation}\label{e:15}4/k<\delta.\end{equation} Let the partition $\PQ_{m(j)}$ and $\gamma:=\delta_{m(j)}>0$ where $i<j$ be provided for $\beta$-shadowing with $\beta=1/m(j)\le 1/k$. Note that by condition (C\ref{C6}) $\gamma<1/k$, hence together with (\ref{e:15}) we obtain \begin{equation}\label{e:16}\gamma+2/k<3/k<4/k<\delta.\end{equation}
Assume for simplicity that $\mathbf{x}$ is a $\delta$-pseudo orbit and it is a $\gamma$-pseudo orbit for all $s\geq N-1$ for some $N$.
Let $J_s\in \PQ_{m(i)}$ and $Q_s\subset J_s$ be provided as in (1),(2) of the proof of Lemma \ref{l:4} for $\mathbf{x}$ and by the same conditions, let $R_s\in \PQ_{m(j)}$, and let $W_s\subset R_s$ for $s\geq N-1$ be provided by the fact that $\tau\in U_{m(j)}\subset B_{\delta_{m(j)}}(\theta_{m(j)})$. In particular,
\begin{equation}\label{e:19}\tau(Q_s)\supset Q_{s+1},~s\ge 0,~\tau(W_s)\supset W_{s+1},~s\ge N-1.
\end{equation}

First, if $W_N\subset \tau(Q_{N-1})$ then we can switch directly from the arc $Q_{N-1}$ used for $\alpha$-tracing to the arc $W_N$ used for $\beta$-tracing.\\
Now, assume that $W_N\setminus \tau(Q_{N-1})\neq \emptyset$.  Notice that $W_N\cap \Int (Q_N)=\emptyset$, since otherwise
$$W_N\subset Q_N\subset \tau(Q_{N-1})\text{ because }\PQ_{m(i)}\prec\PQ_{m(j)},$$
which gives a contradiction. On the other hand, $x_{N-1}\in J_{N-1}$ and $d(x_N,\tau(x_{N-1}))<\gamma$ and $\diam (R_N)<1/k$. Also $W_N\subset R_N$ and $x_N\in R_N$. Then if $\xi:=\gamma+1/k$, $$W_N\subset R_N\subset B_{\xi} (\tau(J_{N-1}))\subset B_{\delta} (\tau(J_{N-1})),$$ where the last inclusion is a consequence of (\ref{e:16}). 
Since $W_N$ is not included in the arc $\tau(Q_{N-1})$ and both diameters $\diam L_2^{J_N}$ and $\diam R_2^{J_N}$ are greater than $2\delta$, the only possibility is that $$W_N\subset L_2^{J_{N}}\cup M^{J_{N}}\cup R_2^{J_{N}}.$$

But by (C\ref{C5}) we have $$B_{4\delta}(\theta_{m(i)}(W_N))\subset \theta_{m(i)}(J_N)=\theta_{m(i)}(Q_N),$$ and thus since $\tau\in U_{m(i)}$,
$$B_{3\delta}(\theta_{m(i)}(W_N))\subset \tau(Q_N)$$
and
\begin{equation}\label{e:17}B_{2\delta}(\tau(W_N))\subset \tau(Q_N).\end{equation}
On the other hand, from (\ref{e:19}) we obtain
\begin{equation}\label{e:18}W_{N+1}\subset \tau(W_N)\subset B_{2\delta}(\tau(W_N)).
\end{equation}

 Gluing (\ref{e:17}) and (\ref{e:18}) together, we get

 $$W_{N+1}\subset \tau(Q_N).$$ This allows us to switch from the arc $Q_{N}$ used for $\alpha$-tracing to the arc $W_{N+1}$ used for $\beta$-tracing.

Then using inductively the above construction we obtain that for every $\eps>0$, every $\tau\in \bigcap_{n\ge 1} A_n$ and every asymptotic pseudo orbit  $\mathbf{x}=\set{x_s}_{s=0}^\infty$ which is $\delta$-pseudo orbit, we can find a sequence of closed arcs $I_s\subset \Ci$ with the following properties:
\begin{enumerate}
	\item $\tau(I_s)\supset I_{s+1} $,
	\item $\diam (I_s\cup \{x_s\})<\eps$,
	\item for every $\beta>0$ there is $N>0$ such that $\diam (I_s\cup \{x_s\})<\beta$ for all $s>N$.
\end{enumerate}
Now it is enough to take any $z\in \bigcap_{s\ge 0} \tau^{-s}(I_s)$ to $\eps$-trace and asymptotically trace $\mathbf{x}$.
\end{proof}

\section{Final remarks}\label{sec:Final}

As we mentioned before, some inspiration for this paper comes from \cite{MazOpr} where it is proved that on manifolds (including dimension one) s-limit shadowing is dense in the class of continuous maps.
In particular, it is dense in continuous maps on the circle and the interval. It was also proven in our recent paper \cite{BCOT} that s-limit shadowing is dense also in Lebesgue measure preserving maps on the interval. Then, in the view of the above results and the results in the present paper it is natural to expect that s-limit shadowing is generic also in Lebesgue measure preserving interval maps.
Unfortunately, the proof of Theorem~\ref{thm:main} will not directly work in that case as we explain below.
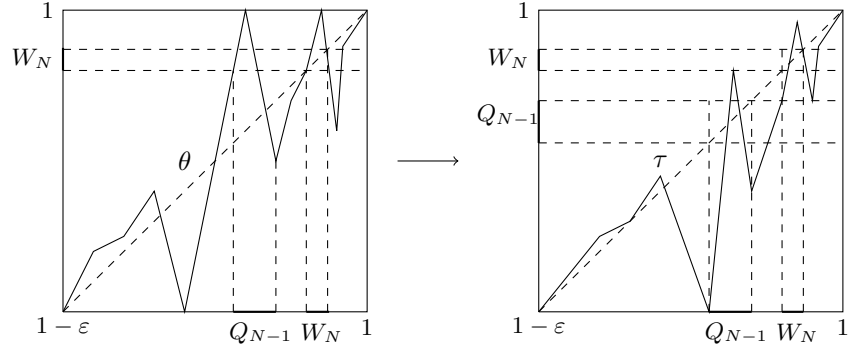
\begin{figure}[!ht]
	\centering
	\begin{tikzpicture}[scale=4]
	\draw (0,0)--(0,1)--(1,1)--(1,0)--(0,0);
	\draw[dashed] (0,0)--(1,1);
	\draw(0,0)--(0.1,0.2)--(0.2,0.25)--(0.3,0.4)--(0.4,0)--(0.6,1)--(0.7,0.5)--(0.75,0.7)--(0.8,0.8)--(0.85,1)--(0.9,0.6)--(0.92,0.88)--(1,1);
	\draw[thick] (0.56,0)--(0.7,0);
	\draw[thick] (0.8,0)--(0.875,0);
	\draw[thick] (0,0.8)--(0,0.875);
	\draw[dashed] (0.7,0)--(0.7,0.5);
	\draw[dashed] (0.8,0)--(0.8,0.8);
	\draw[dashed] (0.87,0)--(0.87,0.87);
	\draw[dashed] (0,0.8)--(1,0.8);
	\draw[dashed] (0,0.87)--(1,0.87);
	\draw[dashed] (0.56,0)--(0.56,0.8);
	\node at (-0.05,1) {\small $1$};
	\node at (0,-0.05) {\small $1-\eps$};
	\node at (0.65,-0.07) {\small $Q_{N-1}$};
	\node at (0.85,-0.07) {\small $W_N$};
	\node at (-0.1, 0.84) {\small $W_N$};
	\node at (1,-0.05) {\small  $1$};
	\node at (0.4,0.5) { $\theta$};
	\draw[->] (1.1,1/2)--(1.3,1/2);
	\end{tikzpicture}
	\begin{tikzpicture}[scale=4]
	\draw (0,0)--(0,1)--(1,1)--(1,0)--(0,0);
	\draw[dashed] (0,0)--(1,1);
	\draw(0,0)--(0.2,0.25)--(0.3,0.3)--(0.4,0.45)--(0.56,0)--(0.64,0.8)--(0.7,0.4)--(0.8,0.7)--(0.85,0.96)--(0.9,0.7)--(0.92,0.88)--(1,1);
	\draw[thick] (0.56,0)--(0.7,0);
	\draw[thick] (0.8,0)--(0.87,0);
	\draw[thick] (0,0.8)--(0,0.87);
	\draw[dashed] (0.7,0)--(0.7,0.7);
	\draw[dashed] (0.8,0)--(0.8,0.87);
	\draw[dashed] (0.87,0)--(0.87,0.87);
	\draw[dashed] (0,0.8)--(1,0.8);
	\draw[dashed] (0,0.87)--(1,0.87);
	\draw[dashed] (0.56,0)--(0.56,0.7);
	\draw[dashed] (0,0.56)--(1,0.56);
	\draw[dashed] (0,0.7)--(1,0.7);
	\draw[thick] (0,0.56)--(0,0.7);
	\node at (-0.05,1) {\small $1$};
	\node at (0,-0.05) {\small $1-\eps$};
	\node at (0.65,-0.07) {\small $Q_{N-1}$};
	\node at (-0.1,0.65) {\small $Q_{N-1}$};
	\node at (0.85,-0.07) {\small $W_N$};
	\node at (-0.1, 0.84) {\small $W_N$};
	\node at (1,-0.05) {\small  $1$};
	\node at (0.4,0.5) { $\tau$};
	
	\end{tikzpicture}
	\caption{After perturbation the image of $Q_{N-1}=Q_N$ covers itself. Therefore, $\eps$-tracing is still possible, however the image of $Q_{N-1}=Q_N$ does not cover $W_{N}=W_{N+1}$ anymore.}\label{fig:explanation}
\end{figure}
The main technique in our proof is showing that $W_N\subset \tau(Q_{N-1})$ or $W_{N+1}\subset \tau(Q_{N})$ under the map $\tau$ which is small perturbation of $\theta$, see the discussion after \eqref{e:19}, for more details. While for small perturbation we may ensure that $\tau(Q_{N-1})\supset Q_N$, we cannot control covering of smaller sets $W_s$ by $Q_s$, see Figure~\ref{fig:explanation} for an intuitive explanation of possible problems.
This situation may happen near endpoints of the interval, where we cannot guarantee sufficiently long overlapping of $\tau(Q_{N-1})$ or $\tau(Q_{N})$. Such a situation does not happen on the circle due to the lack of boundary. This motivates us to state the following question.

\begin{question}\label{q:B}
Is s-limit shadowing generic in Lebesgue measure preserving maps on the interval?
\end{question}

As we explained above, possible positive answer to the above question will require some new techniques, beyond the ones used in the present work.
On the other hand, a standard technique to disprove that a condition is generic is to find an open set without the property. Such approach is again impossible, because we have proven \cite{BCOT} that s-limit shadowing is dense in $C_{\lambda}(\mathbb{S}^1)$. 

\section*{Acknowledgements}
J. Bobok was supported by the European Regional Development Fund, project No.~CZ 02.1.01/0.0/0.0/16\_019/0000778. J. \v Cin\v c was supported by FWF Schrödinger Fellowship stand-alone project J 4276-N35. P. Oprocha was supported by National Science Centre, Poland (NCN), grant no. 2019/35/B/ST1/02239.

\end{document}